\documentclass[11pt]{amsart}

\usepackage{amsmath}
\usepackage{amsthm}
\usepackage{amssymb}
\usepackage{mathrsfs}
\usepackage{comment}
\usepackage{color}
\usepackage[colorlinks,citecolor=blue,urlcolor=black,linkcolor=black]{hyperref}

\newtheorem{theorem}{Theorem}[section]
\newtheorem{claim}[theorem]{Claim}

\newtheorem{lemma}[theorem]{Lemma}

\newtheorem{corollary}[theorem]{Corollary}

\newtheorem*{theorem*}{Theorem}

\theoremstyle{definition}
\newtheorem{definition}[theorem]{Definition}

\newtheorem{question}[theorem]{Question}

\theoremstyle{remark}

\newcount\skewfactor
\def\mathunderaccent#1#2 {\let\theaccent#1\skewfactor#2
\mathpalette\putaccentunder}
\def\putaccentunder#1#2{\oalign{$#1#2$\crcr\hidewidth
\vbox to.2ex{\hbox{$#1\skew\skewfactor\theaccent{}$}\vss}\hidewidth}}

\def\smallbox#1{\leavevmode\thinspace\hbox{\vrule\vtop{\vbox
   {\hrule\kern1pt\hbox{\vphantom{\tt/}\thinspace{\tt#1}\thinspace}}
   \kern1pt\hrule}\vrule}\thinspace}

\def\qedref#1{$\qed_{\reforiginal{#1}}$}

\title{Quadruples and cubes}
\author{Shimon Garti}
\address{Einstein Institute of Mathematics,
 The Hebrew University of Jerusalem,
 Jerusalem 9190401, Israel}
\email{shimon.garty@mail.huji.ac.il}

\subjclass[2010]{03E02, 03E55}
\keywords{Terraced cubes, polarized relations, wondrous ideals}
\thanks{Research supported by ISF grant no. 2320/23}

\begin{document}
\let\labeloriginal\label
\let\reforiginal\ref
\def\ref#1{\reforiginal{#1}}
\def\label#1{\labeloriginal{#1}}

\begin{abstract}
We prove, in \textsf{ZFC}, that if $\lambda$ is an uncountable cardinal then the negative arrow relation $\left( \begin{smallmatrix} \lambda^{++} \\ \lambda^+ \\ \lambda \end{smallmatrix} \right) \nrightarrow \left( \begin{smallmatrix} \lambda^{++} \\ \lambda^+ \\ \lambda \end{smallmatrix} \right)$ holds.
If $\lambda=\aleph_0$ then the positive relation $\left( \begin{smallmatrix} \lambda^{++} \\ \lambda^+ \\ \lambda \end{smallmatrix} \right) \rightarrow \left( \begin{smallmatrix} \lambda^{++} \\ \lambda^+ \\ \lambda \end{smallmatrix} \right)$ has consistency strength of at least one Woodin cardinal.
The corresponding terraced relation for quadruples fails at every infinite cardinal $\lambda$, in \textsf{ZFC}.
Finally, we show that a wondrous ideal implies a positive polarized relation at a successor and a double successor.
However, from the results on cubes we conclude that there are no Galvin-related wondrous ideals over two consecutive cardinals.
\end{abstract}

\maketitle

\newpage

\section{Introduction}

Positive cube polarized relations are much more challenging than positive relations on pairs.
In this paper we discuss cube relations, and the main result gives a negative answer, in \textsf{ZFC}, to a question about such relations.
Let us commence with the formal definition of these polarized relations.

\begin{definition}
  \label{defpolcube} The cube polarized relations $\left( \begin{smallmatrix} \alpha \\ \beta \\ \gamma \end{smallmatrix} \right) \rightarrow \left( \begin{smallmatrix} \varepsilon \\ \zeta \\ \eta \end{smallmatrix} \right)$ says that for every coloring $d:\alpha\times\beta\times\gamma\rightarrow 2$ one can find $A\subseteq\alpha,B\subseteq\beta$ and $C\subseteq\gamma$ such that ${\rm otp}(A)=\varepsilon,{\rm otp}(B)=\zeta,{\rm otp}(C)=\eta$ and $d\upharpoonright(A\times B\times C)$ is constant.
\end{definition}

It is clear that if either pair in the cube fails to satisfy the pertinent relation then adding a third coordinate will not rectify the problem. Thus a positive relation at every pair of cardinals (there are three such pairs) is a necessary condition for obtaining a positive cube relation.
Notwithstanding, positive cube relations are quite rare, even in cases where the relevant positive relation holds at every pair out of the three cardinals in the domain of the coloring.

One may wonder why is it so, and the following observation explains this issue to some extent.
As we shall see, negative relations at pairs \emph{with many colors} give rise to a negative relation for cubes \emph{with two colors only}.
Since negative relations with many colors are easily obtained, one can show that negative cube relations hold in many cases, even under circumstances in which positive relations hold with two colors at every pair.
This idea will be exploited in the next section.
Addressing a question from \cite{MR4880649} we shall prove the following:

\begin{theorem}
  \label{utheorem} If $\lambda$ is uncountable then $\left( \begin{smallmatrix} \lambda^{++} \\ \lambda^+ \\ \lambda \end{smallmatrix} \right) \nrightarrow \left( \begin{smallmatrix} \lambda^{++} \\ \lambda^+ \\ \lambda \end{smallmatrix} \right)$.
\end{theorem}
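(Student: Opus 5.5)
The plan is to reduce the cube relation to a \emph{pair} coloring with $\lambda$ colors, following the remark in the introduction that negative pair relations with many colors yield negative cube relations with two colors. The reduction is a threshold coloring. Suppose $h:\lambda^{++}\times\lambda^+\rightarrow\lambda$ has the property
\[
(\ast)\quad \text{for all } A\in[\lambda^{++}]^{\lambda^{++}},\ B\in[\lambda^+]^{\lambda^+},\quad \sup\{h(\alpha,\beta):\alpha\in A,\beta\in B\}=\lambda .
\]
Define $d(\alpha,\beta,\gamma)=1$ iff $\gamma<h(\alpha,\beta)$, and $d(\alpha,\beta,\gamma)=0$ otherwise.

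Now let $A,B,C$ have order types $\lambda^{++},\lambda^+,\lambda$ respectively. Any $C\subseteq\lambda$ of order type $\lambda$ is cofinal in $\lambda$; this holds also for singular $\lambda$, since $\lambda$ is a cardinal.
\begin{itemize}
\item Color $1$ is impossible. For any single pair $(\alpha,\beta)$ we have $h(\alpha,\beta)<\lambda$, so some $\gamma\in C$ satisfies $\gamma\geq h(\alpha,\beta)$.
\item Color $0$ is impossible. It would force $h(\alpha,\beta)\leq\min C$ on all of $A\times B$, so $h$ would be bounded there, contradicting $(\ast)$.
\end{itemize}
Hence $d$ witnesses the negative relation. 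This step is routine; the whole weight of the theorem lies in producing $h$.

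To construct $h$ satisfying $(\ast)$, I will use the standard ladder of injections. For each $\lambda^+\leq\alpha<\lambda^{++}$ fix an injection $e_\alpha:\alpha\rightarrow\lambda^+$, and for each $\lambda\leq\beta<\lambda^+$ fix an injection $e_\beta:\beta\rightarrow\lambda$.

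The naive attempt is to set $h(\alpha,\beta)=k(e_\alpha(\beta))$ for some $k:\lambda^+\to\lambda$. This fails: a single $\alpha$ gives only a one-dimensional function on $B$, and pigeonhole produces a fiber of size $\lambda^+$ on which $k$ is constant. So $h$ must use both coordinates more cleverly. Two candidates are:
\begin{itemize}
\item compose via an intermediate ordinal, for example $h(\alpha,\beta)=e_{\delta}(\cdot)$ with $\delta$ depending on $\alpha$ and $\beta$;
\item use a $\rho$-type function in the style of Todorcevic's walks on ordinals, which is known to be unbounded on large rectangles.
\end{itemize}
The intended argument for $(\ast)$ is a $\Delta$-system and pressing-down thinning. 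Given $A,B$ on which $h$ is bounded by some $\mu<\lambda$, shrink $A$ and $B$, keeping full size, until the injections $e_\beta$ restricted to a fixed initial segment of size $\lambda$ take values below $\mu$ on too many points. This contradicts injectivity once $\mu^+\leq\lambda$, or once a set of size $|\mu|^+$ lands inside $\mu$.

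The uncountability of $\lambda$ should enter exactly here. If $\lambda$ is regular uncountable, I would use club guessing on $S^{\lambda^+}_{\lambda}$, or Fodor on $\lambda$ itself, to guarantee that the guessed values are cofinal in $\lambda$. If $\lambda$ is singular, I would split $\lambda$ into ${\rm cf}(\lambda)$ blocks and argue blockwise. This distinction is consistent with the abstract, which says the case $\lambda=\aleph_0$ has large consistency strength, so no such ZFC $h$ can exist there.

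I expect the main obstacle to be verifying $(\ast)$ in ZFC for every uncountable $\lambda$: the uniform unboundedness of a $\lambda$-valued pair coloring on all $\lambda^{++}\times\lambda^+$ rectangles. I would try the club-guessing construction first.
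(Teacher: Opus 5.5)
\textbf{Verdict: the reduction is right, but the proposal has a genuine gap, since no $h$ satisfying $(\ast)$ is ever constructed.}

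Your reduction is correct and is the paper's. With the trivial $e(\beta,\gamma)=\gamma$, the paper's coloring ($c(\alpha,\beta,\gamma)=1$ iff $d(\alpha,\beta)>\gamma$) is exactly your threshold coloring. Your hypothesis $(\ast)$ is a weakening of the paper's ``$\lambda$-colorful'' property, because a set of $\lambda$ ordinals below $\lambda$ is unbounded in $\lambda$.

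The proposal stops exactly where the theorem starts. Neither ``candidate'' is an actual function. The contradiction you sketch, where $|\mu|^+$ points of $\beta$ are sent into $\mu$ by an injection, is only the last line of the paper's colorfulness claim. It bites only after you have found a single $\beta\in B$ at which at least $|\mu|^+$ members of $A$ (in the paper, $\lambda$ of them) produce pairwise distinct intermediate values below $\beta$. Nothing in your sketch produces such a $\beta$ for arbitrary $A$ and $B$. Your dismissal of the naive attempt also misplaces the difficulty: the pigeonhole fibre depends on $\alpha$, and the whole problem is finding a common $B$ for $\lambda^{++}$ many $\alpha$.

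This missing step is not routine.
\begin{itemize}
\item Any $h$ with $(\ast)$ already witnesses $\binom{\lambda^{++}}{\lambda^+}\nrightarrow\binom{\lambda^{++}}{\lambda^+}_\lambda$. At $\lambda=\aleph_0$ that is not known to be a ZFC theorem.
\item Everything you actually specify (ladders of injections, $\Delta$-system thinning, injectivity) makes sense verbatim at $\aleph_0$. So the content has to come from an input that genuinely uses $\lambda>\aleph_0$.
\item The paper's input is the cited ZFC theorem that ${\rm NS}_{\lambda^+}$ is not saturated for uncountable $\lambda$. Diagonal intersections convert this into $\lambda^{++}$ stationary subsets of $\lambda^+$ with pairwise bounded intersections.
\item These sets serve as the fibres of regressive functions $h_\alpha$, in disjoint blocks for different $\alpha$ and with labels taken from an almost disjoint family. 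The aim is that $h_{\alpha_i}$ and $h_{\alpha_j}$ agree, at points of their fibres, only on a set of size $\le\lambda$. Then $d(\alpha,\beta)=g_\beta(h_\alpha(\beta))$ with $g_\beta:\beta\to\lambda$ injective.
\item If you want club guessing, Shelah's ZFC theorem applies to $S^{\lambda^+}_\theta$ for regular $\theta<\lambda$; that is where uncountability enters. It does not cover $S^{\lambda^+}_\lambda$.
\end{itemize}

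Two cautions.
\begin{itemize}
\item $(\ast)$ quantifies over every $B\in[\lambda^+]^{\lambda^+}$, including nonstationary ones. You need the required spread at some point of $B$ even when $B$ avoids any prescribed stationary set. The paper's ``add arbitrary values'' step controls $h_\alpha$ only on the stationary fibres, and its distinctness argument does not reach points outside them. This needs genuine care on either route.
\item The paper does not show that no ZFC $h$ exists at $\aleph_0$. It shows only that the positive relation there needs at least a Woodin cardinal.
\end{itemize}
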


The above relation is called the $\lambda$-terraced cube relation.
It follows from this theorem that the corresponding $\lambda$-terraced relation for quadruples fails even if $\lambda=\aleph_0$.
But for cube relations we need the assumption that $\lambda$ is uncountable.
One may wonder what happens if $\lambda=\aleph_0$, namely whether $\left( \begin{smallmatrix} \aleph_2 \\ \aleph_1 \\ \aleph_0 \end{smallmatrix} \right) \rightarrow \left( \begin{smallmatrix} \aleph_2 \\ \aleph_1 \\ \aleph_0 \end{smallmatrix} \right)$ is consistent with \textsf{ZFC}.
We shall see that this positive relation has a considerable consistency strength, and it requires at least one Woodin cardinal, if consistent at all.

These results have some bearing on polarized relations at pairs of cardinals.
An interesting open problem is whether the strong polarized relation is consistent at some pair of successor and double successor cardinals.
We shall prove that if there exists a wondrous ideal over $\kappa^+$ then $\binom{\kappa^{++}}{\kappa^+}\rightarrow\binom{\kappa^{++}}{\kappa^+}_\kappa$.
From the negative cube relation we can deduce now that there are no wondrous ideals over $\kappa^+$ and $\kappa^{++}$ simultaneously, under an additional assumption about the connection between these ideals.

Cube (and quadruple) relations in the current paper are \emph{strong}, in the sense that the size of the domain of the colorings and the required size of the monochromatic product are identical.
We refer the reader to \cite{MR1297180} and \cite{MR4381747} for interesting theorems about $n$-dimensional polarized relations in which the monochromatic product is strictly less than the cardinality of the coloring.

Our notation is mostly standard, and follows \cite{MR795592}.
An excellent monograph, containing a chapter about polarized relations, is \cite{MR3075383}.
The rest of the paper is arranged in two sections.
The first section deals with cubes (and also quadruples).
The second section is dedicated to classical polarized relations in which the small component is a successor cardinal.
In this section we introduce the concept of wondrous ideals and show how to get positive polarized relations from such ideals.

\newpage

\section{Terraced cubes}

Let $\lambda$ be an infinite cardinal.
The terraced cube at $\lambda$ is the relation $\left( \begin{smallmatrix} \lambda^{++} \\ \lambda^+ \\ \lambda \end{smallmatrix} \right) \rightarrow \left( \begin{smallmatrix} \lambda^{++} \\ \lambda^+ \\ \lambda \end{smallmatrix} \right)_\theta$.
If $\theta=2$, i.e. the coloring has two colors only, then we omit the subscript.

It has been shown in \cite[Theorem 2.1]{MR4880649} that the $\lambda$-terraced cube relation fails if $2^\lambda\leq\lambda^{++}$.
A natural problem is whether this relation is consistent.
In models of \textsf{ZF} one can prove such relations, under the relevant assumptions, see \cite{MR4101445}.
However, in the framework of \textsf{ZFC} it is not clear whether such a positive relation is consistent, see \cite[Question 2.2]{MR4880649}.
Our goal, in this section, is to supply a negative answer.

We commence with a general lemma that transfers questions about cubes into problems about pairs:

\begin{lemma}
  \label{lempairs} Suppose that $\nu>\mu>\lambda\geq\theta$.
  Assume that there are $d:\nu\times\mu\rightarrow\theta$ and $e:\mu\times\lambda\rightarrow\theta$ satisfying the following properties:
  \begin{enumerate}
    \item [$(a)$] For every $A\in[\nu]^{\nu'}$ and $B\in[\mu]^{\mu'}$ there exists $\beta\in{B}$ so that $|\{d(\alpha,\beta)\mid\alpha\in{A}\}|=\theta$, where $\nu'\leq\nu$ and $\mu'\leq\mu$.
    \item [$(b)$] For every $B\in[\mu]^{\mu'}$ and every $C\in[\lambda]^{\lambda'}$ there exists $\beta\in{B}$ so that $|\{e(\beta,\gamma)\mid\gamma\in{C}\}|=\theta$, where $\mu'\leq\mu$ and $\lambda'\leq\lambda$.
  \end{enumerate}
  Then there is a coloring $c:\nu\times\mu\times\lambda\rightarrow\{0,1\}$ witnessing $\left( \begin{smallmatrix} \nu \\ \mu \\ \lambda \end{smallmatrix} \right) \nrightarrow \left( \begin{smallmatrix} \nu' \\ \mu' \\ \lambda' \end{smallmatrix} \right)$.
\end{lemma}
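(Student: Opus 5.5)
The plan is to build $c$ by comparing the two colorings along the shared middle coordinate:
$$c(\alpha,\beta,\gamma)=\begin{cases}0 & \text{if } d(\alpha,\beta)=e(\beta,\gamma),\\ 1 & \text{otherwise.}\end{cases}$$
This is exactly the idea announced in the introduction: a negative pair relation with $\theta$ colors produces a $2$-coloring of the cube by asking whether two $\theta$-valued colorings agree.

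Now fix $A\in[\nu]^{\nu'}$, $B\in[\mu]^{\mu'}$ and $C\in[\lambda]^{\lambda'}$; we must show that $c\upharpoonright(A\times B\times C)$ takes both values. Apply $(a)$ to $A$ and $B$ to obtain $\beta\in B$ with $|\{d(\alpha,\beta)\mid\alpha\in A\}|=\theta$. Since $d$ is $\theta$-valued, this means $\{d(\alpha,\beta)\mid\alpha\in A\}=\theta$, so every color is attained along this fiber.

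Pick any $\gamma\in C$ and let $i=e(\beta,\gamma)<\theta$. Choose $\alpha_0\in A$ with $d(\alpha_0,\beta)=i$; then $c(\alpha_0,\beta,\gamma)=0$. Choose $j<\theta$ with $j\neq i$ (possible since $\theta\geq 2$) and $\alpha_1\in A$ with $d(\alpha_1,\beta)=j$; then $c(\alpha_1,\beta,\gamma)=1$. Hence the product is not monochromatic, and $c$ witnesses the negative relation.

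The only delicate point is reading ``$|\{\cdots\}|=\theta$'' as ``all $\theta$ colors appear''. For infinite $\theta$ this is literally a statement about cardinality, not surjectivity, so in that case the argument needs adjusting. One option is to weaken the comparison defining $c$. A simpler fix is to use $(b)$ symmetrically: for infinite $\theta$, hitting $\theta$ many values of $d(\cdot,\beta)$ on $A$ still leaves infinitely many values, so $d(\cdot,\beta)$ certainly takes a value $\neq i$, which gives color $1$.

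Getting color $0$ is the harder case: one needs some $\gamma\in C$ with $e(\beta,\gamma)$ in the range $\{d(\alpha,\beta)\mid\alpha\in A\}$. I would first try to handle this by intersecting the two ranges. If that fails, I would redefine $c$ to be $0$ exactly when $d(\alpha,\beta)\le e(\beta,\gamma)$. Since both ranges are then unbounded in $\theta$ (by regularity, or by passing to cofinal ranges), both colors appear. The assumption $\theta\geq2$ is needed throughout, and it is implicit since the case $\theta=1$ is vacuous.
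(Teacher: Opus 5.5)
Your detailed construction, $c=0$ iff $d(\alpha,\beta)=e(\beta,\gamma)$, is correct only for finite $\theta\ge 2$, where it uses only $(a)$. The lemma is applied with $\theta=\lambda$ infinite, and in that case the equality coloring fails, so ``intersecting the two ranges'' cannot be made to work. Hypotheses $(a)$ and $(b)$ only constrain the \emph{cardinality} of the ranges. So if $d,e$ satisfy them, so do $\iota_0\circ d$ and $\iota_1\circ e$, where $\iota_0,\iota_1$ are injections of $\theta$ into two disjoint subsets of $\theta$ of size $\theta$ (for instance the even and the odd ordinals below $\theta$). For these colorings $d(\alpha,\beta)\neq e(\beta,\gamma)$ everywhere, and your $c$ is constantly $1$. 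For infinite $\theta$ the equality idea has to be discarded, not patched.

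Your fallback, $c=0$ iff $d(\alpha,\beta)\le e(\beta,\gamma)$, is exactly the paper's coloring and does prove the lemma, but two points in your sketch need fixing.

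First, regularity plays no role. Any $X\subseteq\theta$ with $|X|=\theta$ is unbounded in $\theta$, because $X\subseteq\xi$ for some $\xi<\theta$ would give $|X|\le|\xi|<\theta$.

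Second, you do not need a single $\beta$ at which both ranges are large. The two colors are found at two possibly different points of $B$, each using one hypothesis:
\begin{itemize}
\item Take $\beta_1$ from $(a)$ and any $\gamma\in C$, and pick $\alpha\in A$ with $d(\alpha,\beta_1)>e(\beta_1,\gamma)$. This gives color $1$; the strict inequality is where $\theta$ being an infinite (limit) cardinal is used.
\item Take $\beta_0$ from $(b)$ and any $\alpha\in A$, and pick $\gamma\in C$ with $e(\beta_0,\gamma)\ge d(\alpha,\beta_0)$. This gives color $0$.
\end{itemize}
So your equality coloring for finite $\theta$ together with the $\le$ coloring for infinite $\theta$ covers every $\theta\ge 2$.

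Finally, the case $\theta=1$ is not vacuous. There $(a)$ and $(b)$ hold trivially for nonempty sets, while the conclusion fails for $\nu'=\mu'=\lambda'=1$. Hence $\theta\ge 2$ is a genuine implicit hypothesis of the statement.
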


\par\noindent\emph{Proof}. \newline
Given $\alpha\in\nu,\beta\in\mu,\gamma\in\lambda$ let $c(\alpha,\beta,\gamma)=0$ iff $d(\alpha,\beta)\leq e(\beta,\gamma)$.
Thus $c(\alpha,\beta,\gamma)=1$ iff $d(\alpha,\beta)>e(\beta,\gamma)$.
Suppose that $A\in[\nu]^{\nu'}, B\in[\mu]^{\mu'}$ and $C\in[\lambda]^{\lambda'}$.
Choose $\beta_1\in{B}$ for which $|\{d(\alpha,\beta_1)\mid\alpha\in{A}\}|=\theta$.
Choose $\gamma_1\in{C}$ and $\alpha_1\in{A}$ so that $d(\alpha_1,\beta_1)>e(\beta_1,\gamma_1)$.
This is possible since $\{d(\alpha,\beta_1)\mid\alpha\in{A}\}$ is unbounded in $\theta$.
By definition, $c(\alpha_1,\beta_1,\gamma_1)=1$.

Now choose $\beta_0\in{B}$ for which $|\{e(\beta_0,\gamma)\mid\gamma\in{C}\}|=\theta$.
Choose $\alpha_0\in{A}$ and $\gamma_0\in{C}$ so that $d(\alpha_0,\beta_0)\leq e(\beta_0,\gamma_0)$.
This is possible since the set $\{e(\beta_0,\gamma)\mid\gamma\in{C}\}$ is unbounded in $\theta$.
By definition, $c(\alpha_0,\beta_0,\gamma_0)=0$.
The proof is accomplished.

\hfill \qedref{lempairs}

The important feature of the lemma is the shift from $\theta$-many colors in $d$ and $e$ to $\{0,1\}$ in $c$.
In order to apply this lemma in the specific case of terraced cube relations, we need two additional claims.
The first one is rather trivial.

\begin{lemma}
  \label{lemtrivial} Let $\lambda$ be an infinite cardinal.
  There exists a coloring $e:\lambda^+\times\lambda\rightarrow\lambda$ such that for every $B\in[\lambda^+]^{\lambda^+}$ and $C\in[\lambda]^\lambda$ one can find $\beta\in{B}$ for which $|\{e(\beta,\gamma)\mid\gamma\in{C}\}|=\lambda$.
\end{lemma}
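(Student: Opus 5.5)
For each $\beta<\lambda^+$ with $\beta\geq\lambda$, fix a bijection $f_\beta:\lambda\rightarrow\beta$; for $\beta<\lambda$ set $e(\beta,\cdot)$ arbitrarily, say identically $0$. The natural candidate is $e(\beta,\gamma)=f_\beta^{-1}(\gamma)$, or equivalently we choose $e(\beta,\cdot)$ to be a bijection of $\lambda$ onto itself that varies with $\beta$. One has to be careful here. If $e(\beta,\cdot)$ is injective on $\lambda$ for a single $\beta$, then for every $C\in[\lambda]^\lambda$ the set $\{e(\beta,\gamma)\mid\gamma\in C\}$ already has size $|C|=\lambda$. So the trivial choice $e(\beta,\gamma)=\gamma$ works for every $\beta$.

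The plan is to define $e(\beta,\gamma)=\gamma$ for all $\beta<\lambda^+$ and $\gamma<\lambda$. Given any $B\in[\lambda^+]^{\lambda^+}$ (in fact any nonempty $B$) and $C\in[\lambda]^\lambda$, pick any $\beta\in B$. Then $\{e(\beta,\gamma)\mid\gamma\in C\}=C$, which has cardinality $\lambda$.

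There is essentially no obstacle; this matches the remark that the lemma is ``rather trivial''. The only point to check is that the lemma requires just cardinality $\lambda$ of the color set, not some stronger property. When Lemma \ref{lempairs} is later applied with $\theta=\lambda$, unboundedness in $\theta$ is what is used, and a subset of $\lambda$ of size $\lambda$ is unbounded in $\lambda$ because $\lambda$ is a cardinal. So the identity coloring suffices, and the real work is deferred to the companion lemma producing $d:\lambda^{++}\times\lambda^+\rightarrow\lambda$.
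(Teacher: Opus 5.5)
Your proof is correct and matches the paper's exactly. The paper also takes the identity coloring $e(\beta,\gamma)=\gamma$, so that $\{e(\beta,\gamma)\mid\gamma\in C\}=C$ for any $\beta\in B$; the detour through the bijections $f_\beta$ in your first paragraph is unnecessary and can be dropped.
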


\par\noindent\emph{Proof}. \newline
Define $e(\beta,\gamma)=\gamma$.

\hfill \qedref{lemtrivial}

The second claim is a bit more involved.

\begin{definition}
  \label{defcolorful} Let $\lambda$ be an infinite cardinal.
  A coloring $d:\lambda^{++}\times\lambda^+\rightarrow\lambda$ is $\lambda$-colorful if for every $B\in[\lambda^+]^{\lambda^+}$ and $A\in[\lambda^{++}]^{\lambda^{++}}$ there exists $\beta\in{B}$ so that $|\{d(\alpha,\beta)\mid\alpha\in{A}\}|=\lambda$.
\end{definition}

We shall prove that a colorful function exists under the assumption that there is a sad family (the acronym sad stands for stationary almost disjoint).
Let $\lambda$ be an infinite cardinal.
A family $\mathcal{S}=(S_\delta\mid\delta\in\mu)$ of subsets of $\lambda^+$ is \emph{sad} if and only if every $S_\alpha$ is a stationary subset of $\lambda^+$ and $S_\alpha\cap S_\beta$ is bounded in $\lambda^+$ whenever $\alpha<\beta<\mu$.

\begin{claim}
  \label{clmsadcolorful} Let $\lambda$ be an infinite cardinal.
  If there exists a $\lambda^+$-sad family $(S_\delta\mid\delta\in\lambda^{++})$ then there is a colorful $d$ for $\lambda$.
\end{claim}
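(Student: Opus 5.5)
The plan is to turn the almost-disjointness of the sad family into injectivity at a single coordinate $\beta$. For every $\beta<\lambda^+$ with $\beta\geq\lambda$ fix a bijection $e_\beta:\beta\rightarrow\lambda$; for $\beta<\lambda$ let $e_\beta$ be any injection of $\beta$ into $\lambda$. Given the sad family $(S_\delta\mid\delta\in\lambda^{++})$, define for $\alpha<\lambda^{++}$ and $\beta<\lambda^+$
\[
d(\alpha,\beta)=e_\beta(\sup(S_\alpha\cap\beta))\ \text{ if } \sup(S_\alpha\cap\beta)<\beta, \qquad d(\alpha,\beta)=0 \text{ otherwise.}
\]
The idea is that when $\beta$ is not an accumulation point of $S_\alpha$ and $S_\alpha\cap\beta\neq\emptyset$, the supremum is actually $\max(S_\alpha\cap\beta)$, which lies in $S_\alpha$. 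For different $\alpha$'s these maxima should be pairwise distinct once everything happens above the bounds of the pairwise intersections. Since $e_\beta$ is injective, distinct maxima then yield distinct colors.

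For the verification, let $A\in[\lambda^{++}]^{\lambda^{++}}$ and $B\in[\lambda^+]^{\lambda^+}$, and pick distinct $\alpha_i\in A$ for $i<\lambda$. For $i<j<\lambda$ the set $S_{\alpha_i}\cap S_{\alpha_j}$ is bounded in $\lambda^+$, and so is ${\rm acc}(S_{\alpha_i})\cap{\rm acc}(S_{\alpha_j})$. The latter holds because a common accumulation point above $\sup(S_{\alpha_i}\cap S_{\alpha_j})$ is not ruled out a priori, so I expect this to be the main obstacle; I return to it below. Granting it, there are only $\lambda$ many pairs, so by regularity of $\lambda^+$ there is $\delta<\lambda^+$ bounding all these intersections and also all the ordinals $\min(S_{\alpha_i}\setminus(\delta+1))$. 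Choose $\beta\in B$ above all of these. Then $\beta$ is an accumulation point of at most one $S_{\alpha_i}$. For every other $i$, the ordinal $\max(S_{\alpha_i}\cap\beta)$ exists, lies in $S_{\alpha_i}\setminus(\delta+1)$, and is below $\beta$. These maxima are pairwise distinct, since any two sets $S_{\alpha_i},S_{\alpha_j}$ share no element above $\delta$. Hence $|\{d(\alpha,\beta)\mid\alpha\in A\}|=\lambda$, as required by Definition \ref{defcolorful}.

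The delicate point is the boundedness of common accumulation points. Bounded intersection of the sets does not by itself forbid two sets from sharing accumulation points; two disjoint stationary sets can share a club of limits. My first attempt would therefore not rely on accumulation points at all. Instead I would modify the definition so that the relevant ordinal is forced to be an element of $S_\alpha$. Concretely, I would replace each $S_\alpha$ by a non-accumulating thinning, or work only with $i$'s for which $\beta\notin{\rm acc}(S_{\alpha_i})$. Failing that, I would use the stationarity of the $S_\alpha$'s against the club of limit points of $B$, choosing $\beta$ so that for $\lambda$ many $i$ the set $S_{\alpha_i}$ has a last element below $\beta$ and above $\delta$. One would pigeonhole among the $\lambda^{++}$ many available $\alpha$'s in $A$, rather than fixing the $\alpha_i$ in advance, to find $\lambda$ of them that are not accumulating at a suitable $\beta\in B$. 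This selection step is where I expect the real work, and the sad hypothesis, stationarity included, to be used.
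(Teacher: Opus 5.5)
\textbf{There is a genuine gap: the claim that common accumulation points are bounded is false.} Your verification needs ${\rm acc}(S_{\alpha_i})\cap{\rm acc}(S_{\alpha_j})$ to be bounded, but this never holds. For any unbounded $S\subseteq\lambda^+$ the set ${\rm acc}(S)$ is a club. So $E=\bigcap_{i<\lambda}{\rm acc}(S_{\alpha_i})$ is a club. If $B\subseteq E$, then $\sup(S_{\alpha_i}\cap\beta)=\beta$, i.e.\ $d(\alpha_i,\beta)=0$, for every $i<\lambda$ and every $\beta\in B$. Fixing the $\alpha_i$ before choosing $\beta$ therefore cannot work with your $d$.

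There is a second error. When $\beta\notin{\rm acc}(S_\alpha)$, the ordinal $\sup(S_\alpha\cap\beta)$ need not equal $\max(S_\alpha\cap\beta)$. It can be an accumulation point $\gamma<\beta$ of $S_\alpha$ with $\gamma\notin S_\alpha$, and such a $\gamma$ may be shared by many $S_\alpha$. So distinctness of the colours also fails.

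Your fallbacks do not repair this. No unbounded set can be thinned to one without accumulation points, since its accumulation points always form a club. Selecting the $\alpha$'s after $B$ also fails in general, because the sad hypothesis says nothing about accumulation points. For instance, if there is a $\lambda^+$-Kurepa tree, one can build a sad family of size $\lambda^{++}$ all of whose members accumulate at every point of one club $E$. To do this, add to each member a nonstationary ``filler'' coded by pairwise eventually different functions $\lambda^+\to\lambda$. For that family your $d$ vanishes on $\lambda^{++}\times E$.

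\textbf{The paper's route.} It avoids accumulation points entirely by using the sad sets as fibres of regressive functions.
\begin{itemize}
\item Each $\alpha$ receives $\lambda^+$ private sad sets $S_{\delta(\alpha,\xi)}$, with $\delta(\alpha,\xi)\in M_\alpha$ and the $M_\alpha$ pairwise disjoint.
\item These are indexed by $\xi\in A_\alpha$, where $(A_\alpha\mid\alpha<\lambda^{++})$ is an almost disjoint family in $[\lambda^+]^{\lambda^+}$.
\item $h_\alpha$ takes the value $\xi$ on $S_{\delta(\alpha,\xi)}$, and $d(\alpha,\beta)=g_\beta(h_\alpha(\beta))$.
\end{itemize}
A collision $h_{\alpha_i}(\beta)=h_{\alpha_j}(\beta)=\xi$ forces $\xi\in A_{\alpha_i}\cap A_{\alpha_j}$ (at most $\lambda$ values). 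It also forces $\beta\in S_{\delta(\alpha_i,\xi)}\cap S_{\delta(\alpha_j,\xi)}$ (at most $\lambda$ points). So for fixed $\alpha_i$, $i<\lambda$, all collisions lie in a set $B'$ of size at most $\lambda$. Almost disjointness is used only through membership, which is exactly what it controls.

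Be aware, however, that this collision argument only applies at $\beta$ lying in the sets on which $h_{\alpha_i}$ was specified. The paper fills in the remaining values arbitrarily. Every $B\in[\lambda^+]^{\lambda^+}$ has a nonstationary subset of size $\lambda^+$, which may miss all the sad sets, so those points are not negligible. The paper's write-up needs an extra argument there as well. Your instinct to define $d(\alpha,\beta)$ at every $\beta$ from $S_\alpha\cap\beta$ targets this real issue, but the $\sup$-coding does not resolve it.
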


\par\noindent\emph{Proof}. \newline
Fix an almost disjoint family $(A_\alpha\mid\alpha\in\lambda^{++})$ of subsets of $\lambda^+$, each of which is of cardinality $\lambda^+$.
Let $(S_\delta\mid\delta\in\lambda^{++})$ be a sad family in $\lambda^+$.
For every $\alpha\in\lambda^{++}$ we define a regressive function $h_\alpha:\lambda^+\rightarrow{A_\alpha}$, which is surjective, by the following procedure.
First, we fix $M_\alpha\subseteq\lambda^{++}$ for every $\alpha\in\lambda^{++}$ such that $|M_\alpha|=\lambda^+$ and if $\alpha<\beta<\lambda^{++}$ then $M_\alpha\cap M_\beta=\varnothing$.
Second, for every $\xi\in{A_\alpha}$ we choose $\delta(\xi,\alpha)\in{M_\alpha}$ such that if $\zeta\neq\xi$ then $\delta(\alpha,\zeta)\neq\delta(\alpha,\xi)$.
Finally, we let $h''S_{\delta(\alpha,\xi)}=\{\xi\}$.
We may assume, without loss of generality, that the domain of $h_\alpha$ is $\lambda^+$ (if not, just add arbitrary values to $h_\alpha$ where needed).
For every $\beta\in\lambda^+$ choose a one-to-one function $g_\beta:\beta\rightarrow\lambda$.
We define:
$$
d(\alpha,\beta)=g_\beta(h_\alpha(\beta)).
$$
Let us show that $d$ is $\lambda$-colorful.
Fix $B\in[\lambda^+]^{\lambda^+}$ and $A\in[\lambda^{++}]^{\lambda^{++}}$.
Choose a subset of $A$ of size $\lambda$, say $\{\alpha_i\mid i\in\lambda\}\subseteq{A}$.
For every $i<j<\lambda$ and each $\xi\in A_{\alpha_i}\cap A_{\alpha_j}$ let $W_{ij\xi}=S_{\delta(\alpha_i,\xi)}\cap S_{\delta(\alpha_j,\xi)}$.
By our assumptions, $|W_{ij\xi}|\leq\lambda$.
Letting $W_{ij}=\bigcup\{W_{ij\xi}\mid\xi\in A_{\alpha_i}\cap A_{\alpha_j}\}$ we see that $|W_{ij}|\leq\lambda$ for every $i<j<\lambda$.

Define $B'=\bigcup\{W_{ij}\mid i<j<\lambda\}$.
It follows that $|B'|\leq\lambda$, and hence $B-B'\neq\varnothing$.
Pick any ordinal $\beta\in B-B'$.
By definition, $\beta\notin W_{ij}$ whenever $i<j<\lambda$.
We claim that $h_{\alpha_i}(\beta)\neq h_{\alpha_j}(\beta)$.
Indeed, if $h_{\alpha_i}(\beta)=h_{\alpha_j}(\beta)=\xi$ then $\xi\in A_{\alpha_i}\cap A_{\alpha_j}$ and hence $\beta\in S_{\delta(\alpha_i,\xi)}\cap S_{\delta(\alpha_j,\xi)}=W_{ij\xi}$.
However, $W_{ij\xi}\subseteq B'$ while $\beta\notin B'$, a contradiction.

Thus $h_{\alpha_i}(\beta)\neq h_{\alpha_j}(\beta)$ for every $i<j<\lambda$.
Let $R=\{h_{\alpha_i}(\beta)\mid i\in\lambda\}$.
Since each $h_\alpha$ is regressive we see that $R\subseteq\beta$.
By the above arguments, $|R|=\lambda$.
Recall that $g_\beta$ is one-to-one, and hence $|g_\beta''R|=\lambda$.
To accomplish the proof notice that $|\{d(\alpha,\beta)\mid\alpha\in{A}\}|\geq|\{d(\alpha_i,\beta)\mid i\in\lambda\}|=\lambda$, so we are done.

\hfill \qedref{clmsadcolorful}

One may wonder, at this point, whether there are $\lambda^+$-sad families of size $\lambda^{++}$.
By an observation of Hayut,\footnote{Articulated for $\omega_1$, \cite{hayut}, but works at every successor cardinal, as proved in the next claim.} this is closely related to the saturation of the non-stationary ideal of $\lambda^+$.
Recall that ${\rm NS}_{\lambda^+}$ is \emph{saturated} iff for every collection $(T_\varepsilon\mid\varepsilon\in\lambda^{++})$ of stationary subsets of $\lambda^+$ there are $\varepsilon<\zeta<\lambda^{++}$ for which $T_\varepsilon\cap T_\zeta$ is stationary.
The existence of a $\lambda^+$-sad family of size $\lambda^{++}$ seems to be a strong version of the non-saturation of ${\rm NS}_{\lambda^+}$.
However, it turns out that these two statements are equivalent.

\begin{claim}
  \label{clmsadandns} Let $\lambda$ be an infinite cardinal.
  Assume that ${\rm NS}_{\lambda^+}$ is not saturated.
  Then there exists a $\lambda^+$-sad family $(T_\alpha\mid\alpha\in\lambda^{++})$.
\end{claim}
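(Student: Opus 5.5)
The plan is a diagonal-intersection argument: first extract from non-saturation an antichain of length $\lambda^{++}$, then shrink each member by a club so that its pairwise intersections with earlier members become bounded. This is the step where "nonstationary intersection" is upgraded to "bounded intersection".

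\emph{Step 1: the antichain.} By the definition of saturation given above, the failure of saturation of ${\rm NS}_{\lambda^+}$ yields a family $(T'_\alpha\mid\alpha\in\lambda^{++})$ of stationary subsets of $\lambda^+$. In this family $T'_\alpha\cap T'_\beta$ is nonstationary whenever $\alpha<\beta<\lambda^{++}$. For each such pair fix a club $C_{\beta\alpha}\subseteq\lambda^+$ with $C_{\beta\alpha}\cap T'_\alpha\cap T'_\beta=\varnothing$.

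\emph{Step 2: the diagonal intersection.} Fix $\alpha\in\lambda^{++}$. Since $|\alpha|\leq\lambda^+$, enumerate $\alpha$ (possibly with repetitions, or choose an injection into $\lambda^+$) as $\alpha=\{\beta^\alpha_i\mid i\in\lambda^+\}$; if $\alpha=0$, nothing needs to be done. Let
$$E_\alpha=\{\xi\in\lambda^+\mid \forall i<\xi,\ \xi\in C_{\alpha\beta^\alpha_i}\},$$
the diagonal intersection of the clubs $C_{\alpha\beta^\alpha_i}$. By the standard fact that diagonal intersections of $\lambda^+$ many clubs are club in the regular cardinal $\lambda^+$, the set $E_\alpha$ is club. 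Put $T_\alpha=T'_\alpha\cap E_\alpha$. This set is stationary, being the intersection of a stationary set with a club.

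\emph{Step 3: boundedness of intersections.} Let $\beta<\alpha$, and pick $i$ with $\beta=\beta^\alpha_i$. If $\xi\in T_\alpha$ and $\xi>i$, then $\xi\in C_{\alpha\beta}$ by the definition of $E_\alpha$. Since also $\xi\in T'_\alpha$, the choice of $C_{\alpha\beta}$ forces $\xi\notin T'_\beta$, and hence $\xi\notin T_\beta$ because $T_\beta\subseteq T'_\beta$. Thus $T_\alpha\cap T_\beta\subseteq i+1$, which is bounded in $\lambda^+$. It follows that $(T_\alpha\mid\alpha\in\lambda^{++})$ is a $\lambda^+$-sad family.

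The only delicate point is organising the indices so that a single club $E_\alpha$ handles all $\beta<\alpha$ at once. This is exactly why the enumeration of $\alpha$ in order type at most $\lambda^+$ is needed, and it is available because $\alpha<\lambda^{++}$. Pairs are treated asymmetrically: only the later set is shrunk against the earlier one. This already suffices, since boundedness of $T_\alpha\cap T_\beta$ is symmetric.
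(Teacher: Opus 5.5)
Your proof is correct and follows the paper's argument: take a witness to non-saturation, shrink each later set by the diagonal intersection of the clubs separating it from all earlier sets (enumerated in order type at most $\lambda^+$), and check that the resulting pairwise intersections are bounded. Your explicit bound $T_\alpha\cap T_\beta\subseteq i+1$ is a more concrete form of the paper's observation that $E_\beta\subseteq^* C_{\alpha\beta}$.
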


\par\noindent\emph{Proof}. \newline
Let $(S_\alpha\mid\alpha\in\lambda^{++})$ be a witness for the non-saturation of ${\rm NS}_{\lambda^+}$.
For every $\alpha<\beta<\lambda^{++}$ choose a club $C_{\alpha\beta}\subseteq\lambda^+$ disjoint from $S_\alpha\cap S_\beta$.
For every $\beta\in\lambda^{++}$ let $E_\beta$ be the diagonal intersection of all the clubs in $\{C_{\alpha\beta}\mid\alpha\in\beta\}$, upon re-enumerating the elements of this family by an enumeration whose order-type is at most $\lambda^+$.

For every $\alpha\in\lambda^{++}$ let $T_\alpha=E_\alpha\cap S_\alpha$.
Thus each $T_\alpha$ is a stationary subset of $\lambda^+$, and we claim that $(T_\alpha\mid\alpha\in\lambda^{++})$ is a $\lambda^+$-sad family.
Indeed, if $\alpha<\beta<\lambda^{++}$ then $C_{\alpha\beta}$ is disjoint from $T_\alpha\cap T_\beta$ since $T_\alpha\cap T_\beta\subseteq S_\alpha\cap S_\beta$.
Moreover, $E_\beta\subseteq^* C_{\alpha\beta}$ which means that $T_\alpha\cap T_\beta\subseteq E_\beta-C_{\alpha\beta}$ and hence $|T_\alpha\cap T_\beta|\leq\lambda$, so we are done.

\hfill \qedref{clmsadandns}

Let us indicate that the argument appearing within the proof hinges on the fact that the size of the family is $\lambda^{++}$.
We can prove now the following:

\begin{theorem}
  \label{thmnegterraced} Let $\lambda$ be an uncountable cardinal.
  Then $\left( \begin{smallmatrix} \lambda^{++} \\ \lambda^+ \\ \lambda \end{smallmatrix} \right) \nrightarrow \left( \begin{smallmatrix} \lambda^{++} \\ \lambda^+ \\ \lambda \end{smallmatrix} \right)$.
\end{theorem}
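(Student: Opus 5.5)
We will apply Lemma \ref{lempairs} with $\nu=\lambda^{++}$, $\mu=\lambda^+$, $\lambda=\lambda$ and $\theta=\lambda$. We also take $\nu'=\nu$, $\mu'=\mu$ and $\lambda'=\lambda$. The hypothesis $\nu>\mu>\lambda\geq\theta$ of the lemma is clear. Clause $(b)$ is supplied by Lemma \ref{lemtrivial}, via the coloring $e(\beta,\gamma)=\gamma$. Clause $(a)$ is precisely the statement that there is a $\lambda$-colorful coloring $d:\lambda^{++}\times\lambda^+\rightarrow\lambda$. By Claim \ref{clmsadcolorful} such a $d$ exists once there is a $\lambda^+$-sad family of size $\lambda^{++}$. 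By Claim \ref{clmsadandns} such a family exists whenever ${\rm NS}_{\lambda^+}$ is not saturated. So the plan splits into two cases, according to whether ${\rm NS}_{\lambda^+}$ is saturated.

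\emph{Case 1: ${\rm NS}_{\lambda^+}$ is not saturated.} Here the chain above goes through directly. Claim \ref{clmsadandns} gives the sad family, Claim \ref{clmsadcolorful} gives the colorful $d$, and Lemma \ref{lempairs}, combined with Lemma \ref{lemtrivial}, yields the negative terraced cube relation.

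\emph{Case 2: ${\rm NS}_{\lambda^+}$ is saturated.} This is the main obstacle, and it is where uncountability of $\lambda$ must enter. First I would invoke Shelah's theorem that for an uncountable cardinal $\lambda$ the ideal ${\rm NS}_{\lambda^+}$ restricted to the stationary set $S^{\lambda^+}_{\neq{\rm cf}(\lambda)}$ is never saturated. Indeed, when $\lambda$ is regular uncountable we have $\omega\neq\lambda$, so $S^{\lambda^+}_\omega$ and $S^{\lambda^+}_\lambda$ are disjoint stationary sets. Since the cofinality-$\omega$ part is nonsaturated, so is ${\rm NS}_{\lambda^+}$ itself. When $\lambda$ is singular, Shelah's result gives the nonsaturation as well. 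Hence for uncountable $\lambda$, Case 2 simply cannot occur, and Case 1 covers everything. In writing this I would state that fact as the key external input. Failing a clean citation, I would note that it suffices to produce $\lambda^{++}$ pairwise almost-disjoint stationary subsets of $\lambda^+$ directly. A fallback is the result of \cite[Theorem 2.1]{MR4880649}: when $2^\lambda\leq\lambda^{++}$ the relation already fails. So only the case $2^\lambda>\lambda^{++}$ needs the non-saturation input. Under that extra hypothesis, the standard argument shows that saturation of ${\rm NS}_{\lambda^+}$ forces $2^\lambda=\lambda^+$, which gives a contradiction.

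It remains to check that the three sizes in the conclusion of Lemma \ref{lempairs} match the relation in Theorem \ref{thmnegterraced}. They do: $A\in[\lambda^{++}]^{\lambda^{++}}$, $B\in[\lambda^+]^{\lambda^+}$ and $C\in[\lambda]^\lambda$ are exactly the sets required for a product of order types $\lambda^{++},\lambda^+,\lambda$. This is because any subset of a cardinal of full cardinality can be thinned to one of full order type. Thus the two-coloring $c$ produced by the lemma witnesses $\left( \begin{smallmatrix} \lambda^{++} \\ \lambda^+ \\ \lambda \end{smallmatrix} \right) \nrightarrow \left( \begin{smallmatrix} \lambda^{++} \\ \lambda^+ \\ \lambda \end{smallmatrix} \right)$.
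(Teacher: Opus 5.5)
Your main line is the paper's proof. You use the non-saturation of ${\rm NS}_{\lambda^+}$ for uncountable $\lambda$ (the result the paper cites from \cite{MR1363421}), then Claim \ref{clmsadandns}, Claim \ref{clmsadcolorful} and Lemma \ref{lemtrivial}, and then the argument of Lemma \ref{lempairs}, which the paper writes out inline instead of citing. The case split and the ``fallback'' are superfluous, and the fallback should be dropped: there is no standard argument that saturation of ${\rm NS}_{\lambda^+}$ forces $2^\lambda=\lambda^+$ (at $\lambda=\aleph_0$ it is false, since under \textsf{MM} ${\rm NS}_{\omega_1}$ is saturated and $2^{\aleph_0}=\aleph_2$), and for uncountable $\lambda$ it holds only vacuously, via the very non-saturation theorem the fallback was meant to avoid.
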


\par\noindent\emph{Proof}. \newline
From \cite{MR1363421} we know that ${\rm NS}_{\lambda^+}$ is not saturated.
By Claim \ref{clmsadandns}, there is a $\lambda^+$-sad family $(T_\alpha\mid\alpha\in\lambda^{++})$.
Hence there is a colorful $d:\lambda^{++}\times\lambda^+\rightarrow\lambda$ as dictated in Claim \ref{clmsadcolorful}.
Let $e:\lambda^+\times\lambda\rightarrow\lambda$ be as guaranteed by Lemma \ref{lemtrivial}.
We define a coloring $c:\lambda^{++}\times\lambda^+\times\lambda\rightarrow\{0,1\}$.
Given $\alpha\in\lambda^{++},\beta\in\lambda^+$ and $\gamma\in\lambda$ we let $c(\alpha,\beta,\gamma)=0$ iff $d(\alpha,\beta)\leq e(\beta,\gamma)$, thus $c(\alpha,\beta,\gamma)=1$ iff $d(\alpha,\beta)>e(\beta,\gamma)$.

Suppose that $A\in[\lambda^{++}]^{\lambda^{++}}, B\in[\lambda^+]^{\lambda^+}$ and $C\in[\lambda]^\lambda$.
Fix $\beta\in{B}$ so that $|\{e(\beta,\gamma)\mid\gamma\in{C}\}|=\lambda$.
Choose $\alpha\in{A}$ and let $\varepsilon=d(\alpha,\beta)$.
Now pick $\gamma\in{C}$ such that $e(\beta,\gamma)\geq\varepsilon=d(\alpha,\beta)$.
It follows that $c(\alpha,\beta,\gamma)=0$.
For the second color, choose $\beta\in{B}$ so that $|\{d(\alpha,\beta)\mid\alpha\in{A}\}|=\lambda$.
Choose $\gamma\in{C}$ and let $\varepsilon=e(\beta,\gamma)$.
Choose $\alpha\in{A}$ so that $d(\alpha,\beta)>\varepsilon$.
It follows from the definition of $c$ that $c(\alpha,\beta,\gamma)=1$, thereby proving the theorem.

\hfill \qedref{thmnegterraced}

The above argument applies, basically, to the case of $\lambda=\aleph_0$ as well.
However, the non-stationary ideal over $\aleph_1$ can be saturated, so in order to obtain the negative cube relation one has to assume that ${\rm NS}_{\aleph_1}$ is not saturated.
Shelah proved that the assertion that ${\rm NS}_{\aleph_1}$ is saturated has consistency strength of one Woodin cardinal.\footnote{The proof can be extracted from \cite{MR1623206}.}

\begin{corollary}
  \label{cornegcubealeph1} The consistency strength of the statement $\left( \begin{smallmatrix} \aleph_2 \\ \aleph_1 \\ \aleph_0 \end{smallmatrix} \right) \rightarrow \left( \begin{smallmatrix} \aleph_2 \\ \aleph_1 \\ \aleph_0 \end{smallmatrix} \right)$ is at least one Woodin cardinal.
\end{corollary}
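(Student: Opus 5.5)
The plan is to argue by contraposition and reduce the corollary to the consistency strength of the saturation of ${\rm NS}_{\aleph_1}$. Suppose $\left( \begin{smallmatrix} \aleph_2 \\ \aleph_1 \\ \aleph_0 \end{smallmatrix} \right) \rightarrow \left( \begin{smallmatrix} \aleph_2 \\ \aleph_1 \\ \aleph_0 \end{smallmatrix} \right)$ holds in some model $V$ of \textsf{ZFC}. I will show that ${\rm NS}_{\aleph_1}$ is saturated in $V$. The cited result of Shelah, that saturation of ${\rm NS}_{\aleph_1}$ has consistency strength at least one Woodin cardinal, then yields an inner model with a Woodin cardinal, or at least the consistency of one. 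This gives the lower bound.

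To see that ${\rm NS}_{\aleph_1}$ must be saturated, assume towards a contradiction that it is not, and rerun the proof of Theorem \ref{thmnegterraced} with $\lambda=\aleph_0$. The steps are as follows.

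\begin{enumerate}
\item[(i)] Claim \ref{clmsadandns} was proved for an arbitrary infinite $\lambda$. It therefore gives an $\aleph_1$-sad family $(T_\alpha\mid\alpha\in\aleph_2)$.
\item[(ii)] Claim \ref{clmsadcolorful}, also stated for every infinite $\lambda$, turns this family into an $\aleph_0$-colorful $d:\aleph_2\times\aleph_1\rightarrow\omega$.
\item[(iii)] Lemma \ref{lemtrivial} supplies $e:\aleph_1\times\aleph_0\rightarrow\omega$ given by $e(\beta,\gamma)=\gamma$.
\item[(iv)] Lemma \ref{lempairs}, applied with $\nu=\aleph_2$, $\mu=\aleph_1$, $\lambda=\theta=\aleph_0$ and $\nu'=\nu$, $\mu'=\mu$, $\lambda'=\lambda$, produces a two-coloring $c$ witnessing the negative cube relation. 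This contradicts the assumption.
\end{enumerate}

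The only place where uncountability of $\lambda$ entered the proof of Theorem \ref{thmnegterraced} was the appeal to \cite{MR1363421}, which says ${\rm NS}_{\lambda^+}$ is never saturated for uncountable $\lambda$. Here that appeal is replaced by the hypothesis of non-saturation.

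The main point to check is that none of the auxiliary steps secretly uses $\lambda>\aleph_0$.

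\begin{enumerate}
\item[(a)] In Claim \ref{clmsadcolorful}, the set $B'$ is a union of $\lambda$ many sets $W_{ij}$ of size at most $\lambda$. When $\lambda=\aleph_0$ it is countable, so $B\setminus B'\neq\varnothing$ because $|B|=\aleph_1$.
\item[(b)] The regressive maps $h_\alpha$ and the injections $g_\beta:\beta\rightarrow\omega$ for $\beta<\omega_1$ exist.
\item[(c)] In Claim \ref{clmsadandns}, a diagonal intersection of $\aleph_1$ clubs in $\omega_1$ is again a club in $\omega_1$. The resulting intersections $T_\alpha\cap T_\beta$ are countable, hence bounded.
\item[(d)] In Lemma \ref{lempairs}, unboundedness of a set of size $\theta=\omega$ in $\omega$ is immediate.
\end{enumerate}

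Once these checks are done, the corollary follows.
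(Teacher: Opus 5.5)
Your proposal follows the paper's own argument, with more detail. Assuming ${\rm NS}_{\aleph_1}$ is not saturated, Claims \ref{clmsadandns} and \ref{clmsadcolorful} and Lemmas \ref{lemtrivial} and \ref{lempairs} are rerun at $\lambda=\aleph_0$; uncountability enters Theorem \ref{thmnegterraced} only through the ZFC non-saturation theorem. Hence the positive cube relation forces ${\rm NS}_{\aleph_1}$ to be saturated, which has the strength of a Woodin cardinal.

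One caution on your check (b), which the paper shares and which does not depend on $\lambda=\aleph_0$. The proof of Claim \ref{clmsadcolorful} uses that $h_\alpha(\beta)=\xi$ implies $\beta\in S_{\delta(\alpha,\xi)}$ for every $\beta$. This is not guaranteed at the points where $h_\alpha$ is given ``arbitrary values''. It also fails on overlaps of the $S_{\delta(\alpha,\xi)}$, and at points not above $\xi$, where $h_\alpha$ would not be regressive. So saying that regressive $h_\alpha$ exist does not settle that step.
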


In this specific case, \cite[Theorem 2.1]{MR4880649} is still meaningful, and it implies that in order to force a positive relation, one has to force $2^{\aleph_0}\geq\aleph_3$.
The scenario in which ${\rm NS}_{\omega_1}$ is saturated and $2^{\aleph_0}\geq\aleph_3$ is unclear,\footnote{See \cite[Question 39]{MR2768692}.} and consequently we do not know whether the $\aleph_0$-terraced relation is consistent with \textsf{ZFC}.
We indicate, however, that under \textsf{AD} the positive relation $\left( \begin{smallmatrix} \aleph_2 \\ \aleph_1 \\ \aleph_0 \end{smallmatrix} \right) \rightarrow \left( \begin{smallmatrix} \aleph_2 \\ \aleph_1 \\ \aleph_0 \end{smallmatrix} \right)$ issues, as shown in \cite{MR4101445}.
Moreover, this is the only positive strong cube relation that holds under \textsf{AD} when all three cardinals are regular.

In the opposite direction, one can conclude that a negative polarized relation always holds (in \textsf{ZFC}) for quadruples, since such a relation contains a cube of three uncountable cardinals even when $\lambda=\aleph_0$.

\begin{corollary}
  \label{corquadruples} Let $\lambda$ be an infinite cardinal.
  Then $\left( \begin{smallmatrix} \lambda^{+3} \\ \lambda^{++} \\ \lambda^+ \\ \lambda \end{smallmatrix} \right) \nrightarrow \left( \begin{smallmatrix} \lambda^{+3} \\ \lambda^{++} \\ \lambda^+ \\ \lambda \end{smallmatrix} \right)$.
\end{corollary}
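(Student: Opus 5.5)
The plan is to reduce to Theorem \ref{thmnegterraced} applied at the uncountable cardinal $\lambda^+$, and then show that a negative cube relation survives the addition of a fourth coordinate. Put $\kappa=\lambda^+$. Then $\kappa$ is uncountable, and $\kappa^+=\lambda^{++}$, $\kappa^{++}=\lambda^{+3}$. So Theorem \ref{thmnegterraced} supplies a coloring $c:\lambda^{+3}\times\lambda^{++}\times\lambda^+\rightarrow\{0,1\}$ that witnesses $\left( \begin{smallmatrix} \lambda^{+3} \\ \lambda^{++} \\ \lambda^+ \end{smallmatrix} \right) \nrightarrow \left( \begin{smallmatrix} \lambda^{+3} \\ \lambda^{++} \\ \lambda^+ \end{smallmatrix} \right)$. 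This step is legitimate even when $\lambda=\aleph_0$, since ${\rm NS}_{\kappa^+}={\rm NS}_{\lambda^{++}}$ is then the non-stationary ideal of a successor of an uncountable cardinal, and the cited non-saturation result applies to it.

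Next I will define the four-dimensional coloring $c^*:\lambda^{+3}\times\lambda^{++}\times\lambda^+\times\lambda\rightarrow\{0,1\}$ by ignoring the last coordinate:
$$c^*(\alpha,\beta,\gamma,\delta)=c(\alpha,\beta,\gamma).$$
Now suppose $A\in[\lambda^{+3}]^{\lambda^{+3}}$, $B\in[\lambda^{++}]^{\lambda^{++}}$, $C\in[\lambda^+]^{\lambda^+}$ and $D\in[\lambda]^{\lambda}$. Here I read order type $\lambda^{+k}$ as cardinality $\lambda^{+k}$ for subsets of the regular cardinal $\lambda^{+k}$, which is how the paper already treats the cube case. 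By the choice of $c$, there are triples $(\alpha_0,\beta_0,\gamma_0)$ and $(\alpha_1,\beta_1,\gamma_1)$ in $A\times B\times C$ with $c$-values $0$ and $1$ respectively. Since $D\neq\varnothing$, fix any $\delta\in D$. Then $c^*(\alpha_i,\beta_i,\gamma_i,\delta)=i$ for $i\in\{0,1\}$, so $c^*\upharpoonright(A\times B\times C\times D)$ is not constant. This establishes the negative quadruple relation for every infinite $\lambda$.

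There is no real obstacle here. The only point needing care is to invoke Theorem \ref{thmnegterraced} at $\lambda^+$ rather than at $\lambda$, because that is exactly what removes the uncountability hypothesis. One also has to check that this indexing matches the quadruple $(\lambda^{+3},\lambda^{++},\lambda^+)$ in the first three coordinates. This is the general monotonicity remark from the introduction: a negative relation on a subproduct of coordinates yields a negative relation on the full product.
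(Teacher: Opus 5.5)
Your proof is correct and is essentially the paper's argument. Like the paper, you apply Theorem \ref{thmnegterraced} at the uncountable cardinal $\lambda^+$ to get a negative cube on the top three coordinates $(\lambda^{+3},\lambda^{++},\lambda^+)$, which works even when $\lambda=\aleph_0$, and then extend that coloring to the quadruple by ignoring the last coordinate.
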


\newpage

\section{wondrous ideals}

The most investigated instance of the polarized partition relation is the case of $\kappa$ and $\kappa^+$ for some infinite cardinal $\kappa$.
In these cases, $\binom{\kappa^+}{\kappa}\rightarrow\binom{\kappa^+}{\kappa}_2$ cannot be established in \textsf{ZFC}.
Indeed, by \cite{MR0202613} if $2^\kappa=\kappa^+$ then $\binom{\kappa^+}{\kappa}\nrightarrow\binom{\kappa^+}{\kappa}_2$.
Thus in order to obtain the strong relation at $\kappa$ and its successor, one has to force $2^\kappa>\kappa^+$.

The first positive result in this direction belongs to Laver\footnote{See \cite{MR371652}. The statement follows from a similar result that Laver proved in that paper.} who proved that $\binom{\aleph_1}{\aleph_0}\rightarrow\binom{\aleph_1}{\aleph_0}_2$ follows from $\mathsf{MA}_{\omega_1}$.
This result remained isolated for many years, partly because it is difficult to generalize Martin's axiom to higher cardinals.
However, Laver's result follows from the weaker assumption $\mathfrak{s}>\aleph_1$, as shown in \cite[Claim 2.4]{MR2927607}.
Moving from Martin's axiom to the splitting number $\mathfrak{s}$, one obtains the strong relation $\binom{\kappa^+}{\kappa}\rightarrow\binom{\kappa^+}{\kappa}_2$ where $\kappa$ is a sufficiently large cardinal by forcing $\mathfrak{s}_\kappa>\kappa^+$, see \cite[Claim 1.2]{MR3201820}.
By \cite{MR1251349}, $\mathfrak{s}_\kappa>\kappa^+$ means that $\kappa$ is at least weakly compact.
Actually, if $\kappa$ is sufficiently large then one can force $\mathfrak{s}_\kappa>\kappa^+$.
Thus in the category of large cardinals, the strong relation is forceable in many cases.\footnote{The possible consistency of this relation at small large cardinals (e.g., strongly inaccessible but not weakly compact) is an open problem.}

Another class of cardinals for which this problem is well-understood is the class of strong limit singular cardinals.
Observe that if $\mu$ is a strong limit singular cardinal then $2^\mu>\mu^+$ necessitates large cardinals.
In fact, the exact consistency strength of $2^\mu>\mu^+$ at strong limit singular cardinals was determined by Gitik, see \cite{MR1007865}.
Nevertheless, in many cases one can force $\binom{\mu^+}{\mu}\rightarrow\binom{\mu^+}{\mu}_2$ where $\mu$ is a strong limit singular cardinal, see \cite{MR2987137} and \cite{MR3509813}.

The last category is, therefore, successor cardinals (as the small component in the polarized relation).
The question is whether $\binom{\kappa^{++}}{\kappa^+}\rightarrow\binom{\kappa^{++}}{\kappa^+}_2$ is consistent for some infinite cardinal $\kappa$.
We indicate that in \textsf{ZF} such a relation is consistent, e.g. $\binom{\aleph_2}{\aleph_1}\rightarrow\binom{\aleph_2}{\aleph_1}_{\aleph_0}$ holds under \textsf{AD}, see \cite{MR4101445}.
However, it is unknown whether such a relation is consistent with \textsf{ZFC}.

It should be emphasized that a positive relation in this category, even at the first relevant case (i.e., $\aleph_1$ and $\aleph_2$) has some consistency strength.
By a result of Galvin (see \cite[Claim 3.1]{MR4531656}), if there is a Kurepa tree then $\binom{\omega_2}{\omega_1}\nrightarrow\binom{2}{\omega_1}_{\aleph_0}$.
Hence the positive relation $\binom{\aleph_2}{\aleph_1}\rightarrow\binom{\aleph_2}{\aleph_1}_{\aleph_0}$ requires at least an inaccessible cardinal, and actually it requires much more (see \cite{MR1024901}).
The almost strong relation $\binom{\kappa^{++}}{\kappa^+}\rightarrow\binom{\tau}{\kappa^+}_\kappa$ for every $\tau\in\kappa^{++}$ (where $\kappa$ is a regular cardinal) can be forced from a huge cardinal above $\kappa$, see \cite{MR3610266}.
Huge cardinals are instrumental here since by collapsing the predecessors of such a cardinal to $\kappa$ one produces a highly saturated ideal over $\kappa^+$.
Following this line, we define the concept of a wondrous ideal and we show that the existence of such an ideal over $\kappa^+$ implies $\binom{\kappa^{++}}{\kappa^+}\rightarrow\binom{\kappa^{++}}{\kappa^+}_\kappa$.

The definition of a wondrous ideal brings together two features of strong saturation.
The first one is \emph{denseness}, and the second is \emph{the enhanced Galvin property}.
Let us define these concepts at $\aleph_1$ (the generalization to $\kappa^+$ is straightforward).

A (uniform) ideal $\mathcal{I}$ over $\aleph_1$ is called \emph{dense} iff there exists a collection $\{y_\eta\mid\eta\in\omega_1\}\subseteq\mathcal{I}^+$ such that for every $A\in\mathcal{I}^+$ there exists $\eta\in\omega_1$ for which $y_\eta\subseteq_{\mathcal{I}}A$.
From very large cardinals one can force such an ideal, as proved by Woodin (see \cite{MR2768692}).
In the usual constructions of such an ideal one obtains a normal ideal (in particular, the ideal is $\aleph_1$-complete).
Let us indicate that denseness implies $(\aleph_2,\aleph_2,\aleph_0)$-saturation, the latter has been forced for ideals over $\aleph_1$ by Laver in \cite{MR673792}, starting from a huge cardinal in the ground model.

The second property that we need will be called \emph{the enhanced Galvin property}.
Let $\kappa$ be a regular uncountable cardinal, and let $\mathscr{F}$ be a normal filter over $\kappa$.
Galvin proved that if $\kappa$ is strongly regular\footnote{A cardinal $\kappa$ is strongly regular iff $\kappa=\kappa^{<\kappa}$.} then every family $\{A_\alpha\mid\alpha\in\kappa^+\}\subseteq\mathscr{F}$ admits a subfamily $\{A_{\alpha_i}\mid i\in\kappa\}$ such that $\bigcap\{A_{\alpha_i}\mid i\in\kappa\}\in\mathscr{F}$.
In particular, if $2^\kappa=\kappa^+$ then $\kappa^+$ is strongly regular and then every normal filter over $\kappa^+$ satisfies Galvin's property.
The enhanced Galvin property requires a bit more: every family $\{A_\alpha\mid\alpha\in\kappa^+\}\subseteq\mathscr{F}$ admits a subfamily $\{A_{\alpha_i}\mid i\in\kappa^+\}$ such that $\bigcap\{A_{\alpha_i}\mid i\in\kappa^+\}\in\mathscr{F}$.

\begin{definition}
  \label{defwondrous} Wondrous ideals. \\
  Let $\mathcal{I}$ be an $\aleph_1$-complete ideal over $\aleph_1$, and let $\mathscr{F}_{\mathcal{I}}$ be the dual filter.
  We shall say that $\mathcal{I}$ is wondrous iff it is $\aleph_1$-dense and the filter $\mathscr{F}_{\mathcal{I}}$ satisfies the enhanced Galvin property.
\end{definition}
The above definition is articulated at the level of $\aleph_1$ but generalizes verbatim to $\kappa^+$ whenever $\kappa$ is an infinite cardinal.
The main result of this section is that wondrous ideals imply the strong polarized relation at successors and double successors.

Though we do not know how to force the existence of wondrous ideals, we can prove (from the results of the previous section) that in some sense there are no such ideals over two consecutive cardinals.
For if $\kappa$ is regular and uncountable, $\mathcal{I}$ is wondrous over $\kappa, \mathcal{J}$ is wondrous over $\kappa^+$ and these ideals are connected in a simple way to be described anon, then the cube relation at $\kappa$ follows.
Since the cube relation fails, we conclude that there is a meaningful limitation on wondrous ideals.
Let us commence with colorings of pairs.

\begin{theorem}
  \label{thmwondstrong} If there exists a wondrous ideal over $\kappa^+$ then $\binom{\kappa^{++}}{\kappa^+}\rightarrow\binom{\kappa^{++}}{\kappa^+}_{\kappa}$.
\end{theorem}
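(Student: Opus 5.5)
Fix a wondrous ideal $\mathcal{I}$ over $\kappa^+$ with dual filter $\mathscr{F}=\mathscr{F}_{\mathcal{I}}$ and a dense family $\{y_\eta\mid\eta\in\kappa^+\}\subseteq\mathcal{I}^+$. Let $c:\kappa^{++}\times\kappa^+\rightarrow\kappa$ be a coloring. The plan is to run a Galvin-style argument in three stages: thin out the colors, thin out the rows, and finally intersect, using the two parts of the definition of wondrousness for the last two stages.

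\textbf{Step 1 (a color per row).} For each $\alpha\in\kappa^{++}$ consider the partition $\kappa^+=\bigcup_{i<\kappa}c(\alpha,\cdot)^{-1}(\{i\})$. I assume here that $\mathcal{I}$ is $\kappa^+$-complete; the paper notes that the usual constructions give normal, hence $\kappa^+$-complete, ideals. Under that assumption some piece is $\mathcal{I}$-positive, and we fix $i_\alpha<\kappa$ with $X_\alpha=\{\beta\mid c(\alpha,\beta)=i_\alpha\}\in\mathcal{I}^+$.

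\textbf{Step 2 (density).} By density choose $\eta_\alpha<\kappa^+$ with $y_{\eta_\alpha}\subseteq_{\mathcal{I}}X_\alpha$. The map $\alpha\mapsto(i_\alpha,\eta_\alpha)$ takes only $\kappa^+$ values, while the domain has size $\kappa^{++}$. Hence there is $A_0\in[\kappa^{++}]^{\kappa^{++}}$ on which $i_\alpha=i$ and $\eta_\alpha=\eta$ are constant. Write $y=y_\eta$. For $\alpha\in A_0$ the set $Z_\alpha=y\setminus X_\alpha$ lies in $\mathcal{I}$.

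\textbf{Step 3 (enhanced Galvin).} Take any $\kappa^+$ many $\alpha\in A_0$ and apply the enhanced Galvin property to $\{\kappa^+\setminus Z_\alpha\}\subseteq\mathscr{F}$. This yields $\kappa^+$ many $\alpha_j$ with $\bigcup_j Z_{\alpha_j}\in\mathcal{I}$, so $B=y\setminus\bigcup_j Z_{\alpha_j}\in\mathcal{I}^+$. In particular $|B|=\kappa^+$, by uniformity. Then $c$ is constantly $i$ on $\{\alpha_j\}\times B$, which gives $\binom{\kappa^{++}}{\kappa^+}\rightarrow\binom{\kappa^+}{\kappa^+}_\kappa$.

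\textbf{Main obstacle.} The real difficulty is obtaining $\kappa^{++}$ rows rather than $\kappa^+$. The enhanced Galvin property as stated only handles families of size $\kappa^+$, so we cannot simply feed it $\{Z_\alpha\mid\alpha\in A_0\}$. My plan is to find a single $B\in\mathcal{I}^+$ with $B\cap Z_\alpha$ of size at most $\kappa$ (or even empty) for $\kappa^{++}$ many $\alpha\in A_0$. The steps are:
\begin{enumerate}
\item Partition $A_0$ into $\kappa^{++}$ disjoint blocks $P_\xi$, each of size $\kappa^+$.
\item Apply Step 3 inside each block to obtain $B_\xi\in\mathcal{I}^+$ with $B_\xi\subseteq y$, together with $\kappa^+$ rows of $P_\xi$ that are $i$-monochromatic on $B_\xi$.
\item Use density again to pick $\eta'_\xi$ with $y_{\eta'_\xi}\subseteq_{\mathcal{I}}B_\xi$.
\item Pigeonhole the $\kappa^{++}$ indices $\xi$ on the $\kappa^+$ possible values of $\eta'_\xi$, so that one $y'=y_{\eta'}$ works for $\kappa^{++}$ many blocks.
\end{enumerate}
After this, each $y'\setminus B_\xi$ is in $\mathcal{I}$. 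We now have $\kappa^{++}$ rows $\alpha$, each with $c(\alpha,\beta)=i$ for $\mathcal{I}$-almost all $\beta\in y'$.

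This pigeonhole does not remove the residual null sets, and that is exactly the obstacle: $\kappa^{++}$ many null sets cannot be absorbed by one positive set. To finish, I would iterate the same density-plus-pigeonhole reduction on the null exceptions themselves. The idea is that each exceptional set $y'\setminus X_\alpha$ is coded by density data of size $\kappa^+$. Enhanced Galvin, applied to a $\kappa^+$-sized family of codes, then gives a positive $B^*\subseteq y'$ that meets every exception in a set we can discard. From $B^*$ we take $\kappa^+$ columns that are monochromatic against $\kappa^{++}$ rows. Checking that these codes really reduce the exceptions to $\kappa^+$ many is the step I expect to need the most care.
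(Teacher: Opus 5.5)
\textbf{There is a genuine gap: the proposal never reaches $\kappa^{++}$ rows.} Your Steps 1--2 coincide with the paper's proof: pick a color $i_\alpha$ with $X_\alpha\in\mathcal{I}^+$ using $\kappa^+$-completeness, use density to get $y_{\eta_\alpha}\subseteq_{\mathcal{I}}X_\alpha$, and pigeonhole on $(i_\alpha,\eta_\alpha)$. But the passage from $\kappa^+$ rows to $\kappa^{++}$ rows is left as an unverified plan: you propose to ``code'' the exceptional null sets and hope they reduce to $\kappa^+$ many. That plan has no mechanism behind it. Density only gives inclusion modulo $\mathcal{I}$, so each new round of density-plus-pigeonhole over $\kappa^{++}$ blocks creates a fresh batch of $\kappa^{++}$ null exception sets $y'\setminus X_\alpha$. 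Nothing bounds the number of distinct exceptions by $\kappa^+$, and nothing you invoke can absorb $\kappa^{++}$ of them into a single null set. As written, you only obtain $\binom{\kappa^{++}}{\kappa^+}\rightarrow\binom{\kappa^+}{\kappa^+}_\kappa$.

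\textbf{The source of the obstacle is a misreading of the enhanced Galvin property.} The paper defines it for a filter over a cardinal $\mu$ (it writes $\kappa$) in terms of families of size $\mu^+$ and subfamilies of size $\mu^+$. For $\mathscr{F}_{\mathcal{I}}$ over $\kappa^+$, this says: every $\{W_\alpha\mid\alpha\in\kappa^{++}\}\subseteq\mathscr{F}_{\mathcal{I}}$ has a subfamily of size $\kappa^{++}$ whose intersection lies in $\mathscr{F}_{\mathcal{I}}$. Your reading cannot be the intended one. By uniformity the tails $\kappa^+\setminus\alpha$, for $\alpha<\kappa^+$, all lie in $\mathscr{F}_{\mathcal{I}}$, and any $\kappa^+$ of them have empty intersection. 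So the property as you read it fails for every uniform ideal, and your Step 3 rests on a vacuous hypothesis.

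\textbf{With the correct reading, the proof ends at once, exactly as in the paper.} Apply the property to $\{\kappa^+\setminus Z_\alpha\mid\alpha\in A_0\}$. This gives $A\in[A_0]^{\kappa^{++}}$ with $Z=\bigcup_{\alpha\in A}Z_\alpha\in\mathcal{I}$. Put $B=y\setminus Z$; then $B\in\mathcal{I}^+$, so $|B|=\kappa^+$. For $\alpha\in A$ and $\beta\in B$ we have $\beta\in y\setminus Z_\alpha\subseteq X_\alpha$, hence $c''(A\times B)=\{i\}$.
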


\par\noindent\emph{Proof}. \newline
Let $\mathcal{I}$ be a wondrous ideal over $\kappa^+$.
Let $c:\kappa^{++}\times\kappa^+\rightarrow\kappa$ be a coloring.
For every $\alpha\in\kappa^{++}$ let $\chi(\alpha)\in\kappa$ be an ordinal for which $x_\alpha=\{\beta\in\kappa^+\mid c(\alpha,\beta)=\chi(\alpha)\}\in\mathcal{I}^+$.
We can choose such $\chi(\alpha)$ by virtue of $\kappa^+$-completeness of $\mathcal{I}$.
Let $x=\{x_\alpha\mid\alpha\in\kappa^{++}\}$.

Let $\{y_\eta\mid\eta\in\kappa^+\}\subseteq\mathcal{I}^+$ be a witness to the density of $\mathcal{I}$.
For every $\alpha\in\kappa^{++}$ choose $\eta(\alpha)\in\kappa^+$ so that $y_{\eta(\alpha)}\subseteq_{\mathcal{I}}x_\alpha$.
By the pigeonhole principle there are $A_0\in[\kappa^{++}]^{\kappa^{++}}$ and $\eta\in\kappa^+$ such that $\eta(\alpha)=\eta$ for every $\alpha\in{A_0}$.
By a similar argument, there are $A_1\in[A_0]^{\kappa^{++}}$ and $\gamma\in\kappa$ such that $\chi(\alpha)=\gamma$ whenever $\alpha\in{A_1}$.

For each $\alpha\in{A_1}$ let $Z_\alpha\in\mathcal{I}$ be such that $y_\eta-Z_\alpha\subseteq{x_\alpha}$.
Let $W_\alpha=\omega_1-Z_\alpha$ for every $\alpha\in{A_1}$, and observe that $W_\alpha\in\mathscr{F}_{\mathcal{I}}$ for each such $\alpha$, and hence $\{W_\alpha\mid\alpha\in{A_1}\}\subseteq\mathscr{F}_{\mathcal{I}}$.
By the enhanced Galvin property, there is a set $A\in[A_1]^{\kappa^{++}}$ so that $W=\bigcap\{W_\alpha\mid\alpha\in{A}\}\in\mathscr{F}_{\mathcal{I}}$.
Let $Z=\omega_1-W$, so $Z\in\mathcal{I}$.
By the above considerations, $Z_\alpha\subseteq{Z}$ for every $\alpha\in{A}$.

Let $B=y_\eta-Z$, so $B\in\mathcal{I}^+$ and in particular $|B|=\aleph_1$.
We claim that $c''(A\times{B})=\{\gamma\}$.
To see this, fix $\alpha\in{A}$ and $\beta\in{B}$.
So $\beta\in y_\eta-Z$ and since $Z_\alpha\subseteq{Z}$ we know that $\beta\in y_\eta-Z_\alpha$.
Recall that $\alpha\in{A}$ and hence $y_\eta=y_{\eta(\alpha)}$ and therefore $\beta\in y_{\eta(\alpha)}-Z_\alpha$.
This means that $\beta\in{x_\alpha}$, so by the definition of $x_\alpha$ and the fact that $\alpha\in{A}\subseteq{A_0}$ one infers that $c(\alpha,\beta)=\gamma$, as required.

\hfill \qedref{thmwondstrong}

Let $\kappa$ be a regular and uncountable cardinal.
Let $\mathcal{I}$ be a $\kappa$-complete ideal over $\kappa$ and let $\mathcal{J}$ be a $\kappa^+$-complete ideal over $\kappa^+$.
We shall say that $\mathcal{I}$ is $\mathcal{J}$-Galvin iff for every collection $\{W_\alpha\mid\alpha\in\kappa^+\}\subseteq\mathscr{F}_{\mathcal{I}}$ there exists $A\in\mathcal{J}^+$ such that $\bigcap\{W_\alpha\mid\alpha\in{A}\}\in\mathscr{F}_{\mathcal{I}}$.

\begin{lemma}
  \label{lemjgalvin} Let $\kappa$ be regular and uncountable and let $u$ be a subset of $\kappa^{++}$ of size $\kappa^{++}$.
  Let $\mathcal{I}$ be a wondrous ideal over $\kappa$, let $\mathcal{J}$ be a wondrous ideal over $\kappa^+$, and assume that $\mathcal{I}$ is $\mathcal{J}$-Galvin.
  If $\{B^\delta\mid\delta\in{u}\}\subseteq\mathcal{I}^+$ then there are $b\in\mathcal{I}^+$ and $e\in[u]^{\kappa^{++}}$ such that $b\subseteq B^\delta$ for every $\delta\in{e}$.
\end{lemma}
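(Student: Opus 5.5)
Following the proof of Theorem \ref{thmwondstrong}, I would first homogenize via density and then use Galvin-type properties to turn the ``$\subseteq_{\mathcal{I}}$'' relations into genuine inclusions on a single positive set.

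Fix a density witness $\{y_\eta\mid\eta\in\kappa\}\subseteq\mathcal{I}^+$ for $\mathcal{I}$. For each $\delta\in u$ choose $\eta(\delta)\in\kappa$ with $y_{\eta(\delta)}\subseteq_{\mathcal{I}}B^\delta$, and $Z_\delta\in\mathcal{I}$ with $y_{\eta(\delta)}-Z_\delta\subseteq B^\delta$. Since $\kappa<\kappa^{++}$, the pigeonhole principle gives $u_0\in[u]^{\kappa^{++}}$ and a fixed $\eta$ with $\eta(\delta)=\eta$ for all $\delta\in u_0$. Put $W_\delta=\kappa-Z_\delta\in\mathscr{F}_{\mathcal{I}}$. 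It then suffices to find $e\in[u_0]^{\kappa^{++}}$ with $W=\bigcap\{W_\delta\mid\delta\in e\}\in\mathscr{F}_{\mathcal{I}}$. Indeed, $b=y_\eta\cap W$ is then $\mathcal{I}$-positive, and $b\subseteq y_\eta-Z_\delta\subseteq B^\delta$ for every $\delta\in e$.

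The enhanced Galvin property of $\mathcal{I}$ only handles $\kappa^+$ many sets, but here $\kappa^{++}$ many are needed. This is where $\mathcal{J}$ and the $\mathcal{J}$-Galvin hypothesis come in.

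1. Split $u_0$ into $\kappa^+$ many disjoint blocks $u_0=\bigcup\{v_\xi\mid\xi\in\kappa^+\}$, each of size $\kappa^{++}$. Alternatively, enumerate $u_0$ and treat it as $\kappa^{++}$ many $\kappa^+$-sequences.

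2. For a $\kappa^+$-sized family, use $\mathcal{J}$-Galvin to get a $\mathcal{J}$-positive (hence size $\kappa^+$) subfamily whose intersection is in $\mathscr{F}_{\mathcal{I}}$.

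3. Repeat this $\kappa^{++}$ times on disjoint $\kappa^+$-chunks of $u_0$. This produces $\kappa^{++}$ sets $V_\zeta=\bigcap_{\delta\in e_\zeta}W_\delta\in\mathscr{F}_{\mathcal{I}}$ with $e_\zeta\in\mathcal{J}^+$.

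4. Use $\kappa^{++}\to$ pigeonhole together with the density of $\mathcal{J}$ to align the index sets. Pull each $e_\zeta$ back to a subset of $\kappa^+$ and pick a dense element $z_{\theta(\zeta)}$ of $\mathcal{J}$ below it. Fix $\theta$ on $\kappa^{++}$ many $\zeta$.

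5. Apply the enhanced Galvin property for $\mathscr{F}_{\mathcal{J}}$ to the sets witnessing $z_\theta\subseteq_{\mathcal{J}}e_\zeta$. This yields uniformity across $\kappa^+$ many $\zeta$.

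6. Apply the enhanced Galvin property for $\mathscr{F}_{\mathcal{I}}$ to the resulting $V_\zeta$'s.

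I expect the main obstacle to be this final amalgamation: obtaining intersections over $\kappa^{++}$ many indices when every Galvin-type property available controls only $\kappa^+$ many. My first attempt would be to make $e$ a union of $\kappa^{++}$ many coordinated blocks. In that scheme, the intersection over each block is a fixed filter set obtained via the density of $\mathcal{I}$ and pigeonhole, since $\mathscr{F}_{\mathcal{I}}$ restricted to $y_\eta$ is determined modulo $\mathcal{I}$ by a dense set of size $\kappa$. That would give $\kappa^{++}$ blocks sharing one common filter set $W^*$. Then $e$ is the union of those blocks, with $W\supseteq W^*$, which has size $\kappa^{++}$, and we take $b=y_\eta\cap W^*$.
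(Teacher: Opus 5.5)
\textbf{There is a genuine gap, at the final amalgamation step.} It is the step you flagged as the main obstacle, and your proposed fix does not work.

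Your opening reduction is correct and coincides with the paper's. It suffices to find $e\in[u_0]^{\kappa^{++}}$ and a \emph{single} $Z\in\mathcal{I}$ with $y_\eta-Z\subseteq B^\delta$ for all $\delta\in e$.

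The fix fails because density of $\mathcal{I}$ is a statement modulo $\mathcal{I}$, and modulo $\mathcal{I}$ every member of $\mathscr{F}_{\mathcal{I}}$ equals $\kappa$. So ``$\mathscr{F}_{\mathcal{I}}\upharpoonright y_\eta$ is determined modulo $\mathcal{I}$ by a dense set of size $\kappa$'' gives you nothing to pigeonhole.

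Concretely, applying density to $y_\eta\cap V_\zeta$ produces some $y_{\eta'}$ with $y_{\eta'}-Z'_\zeta\subseteq V_\zeta$. Here $Z'_\zeta\in\mathcal{I}$ again depends on $\zeta$. Fixing $\eta'$ on $\kappa^{++}$ many $\zeta$ therefore yields no actual common filter set $W^*$; you are back to $\kappa^{++}$ error sets to unify, which is the problem you started with.

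Pigeonholing the actual sets $V_\zeta$ (or $Z_\delta$) would need at most $\kappa^+$ candidates, essentially $2^\kappa=\kappa^+$. In that case the lemma is already trivial by pigeonholing the $Z_\delta$ directly. So the real content is the case $2^\kappa\geq\kappa^{++}$. There you need a Galvin-type property of $\mathscr{F}_{\mathcal{I}}$ for $\kappa^{++}$-sized families: from any $\kappa^{++}$ members, find $\kappa^{++}$ whose intersection meets $y_\eta$ in an $\mathcal{I}$-positive set.

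None of the hypotheses speaks to this directly:
\begin{itemize}
\item the enhanced Galvin property of $\mathcal{I}$ and the $\mathcal{J}$-Galvin property only handle $\kappa^+$-sequences;
\item the properties of $\mathcal{J}$ only reorganize index sets inside $\kappa^+$.
\end{itemize}
Step 6 illustrates this. Enhanced Galvin for $\mathscr{F}_{\mathcal{I}}$ gives only $\kappa^+$ many blocks, so $|e|=\kappa^+$.

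\textbf{Comparison with the paper.} Your steps 1--5 are essentially the paper's proof, once read consistently with $\kappa^{++}$ blocks $M_\xi$ of size $\kappa^+$. Your step 1 also offers $\kappa^+$ blocks of size $\kappa^{++}$, which is inconsistent with steps 2--3. The paper's steps are:
\begin{itemize}
\item $\mathcal{J}$-Galvin in each block gives $v_\xi\in\mathcal{J}^+$ and $Z_\xi\in\mathcal{I}$;
\item density of $\mathcal{J}$ plus enhanced Galvin for $\mathscr{F}_{\mathcal{J}}$ gives one $v\subseteq v_\xi$ for $\kappa^{++}$ many $\xi$.
\end{itemize}
Note that this is $\kappa^{++}$ many, not the ``$\kappa^+$ many $\zeta$'' of your step 5, since $\mathcal{J}$ lives on $\kappa^+$.

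The paper then sets $Z=Z_{\xi_0}$ for the least $\xi_0$ and asserts $y-Z_{\xi_0}\subseteq B^\xi_\alpha$ for all $\xi$ and $\alpha\in v$. You should know this does not resolve the step either. The justification given is that $y-Z_\zeta$ and $y-Z_\xi$ are both subsets of $B^\zeta_\alpha$ and $B^\xi_\alpha$. But $B^\zeta_\alpha$ and $B^\xi_\alpha$ come from disjoint blocks, and only $y-Z_\zeta\subseteq B^\zeta_\alpha$ and $y-Z_\xi\subseteq B^\xi_\alpha$ were established. So nothing links $Z_{\xi_0}$ to $B^\xi_\alpha$ for $\xi\neq\xi_0$.

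The obstacle you identified is exactly where an additional idea or hypothesis is needed. Neither your sketch nor the paper's text supplies it, so as written your proposal does not prove the lemma.
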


\par\noindent\emph{Proof}. \newline
By the denseness of $\mathcal{I}$, there is $y\in\mathcal{I}^+$ and $u'\in[u]^{\kappa^{++}}$ such that $y\subseteq_{\mathcal{I}}B^\delta$ for every $\delta\in u'$.
We may assume, without loss of generality, that $u'=u$.
Our goal is to find a fixed $Z\in\mathcal{I}$ and a set $e\in[u]^{\kappa^{++}}$ so that $b=y-Z\subseteq B^\delta$ for every $\delta\in{e}$.

Let $(M_\xi\mid\xi\in\kappa^{++})$ be a partition of $u$ into sets of cardinality $\kappa^+$, so $\zeta<\xi<\kappa^{++}$ implies $M_\zeta\cap M_\xi=\varnothing$.
Fix $\xi\in\kappa^{++}$ and enumerate the elements of $\{B^\delta\mid\delta\in M_\xi\}$ by $\{B^\xi_\alpha\mid\alpha\in\kappa^+\}$.
Since $\mathcal{I}$ is $\mathcal{J}$-Galvin, there are $v_\xi\in\mathcal{J}^+$ and $Z_\xi\in\mathcal{I}$ such that $y-Z_\xi\subseteq B^\xi_\alpha$ for every $\alpha\in v_\xi$.
We choose such sets for every $\xi\in\kappa^{++}$.
By the denseness of $\mathcal{J}$ applied to $\{v_\xi\mid\xi\in\kappa^{++}\}\subseteq\mathcal{J}^+$ there are $a\in[\kappa^{++}]^{\kappa^{++}}$ and $v\in\mathcal{J}^+$ so that $v\subseteq_{\mathcal{J}}v_\xi$ for every $\xi\in{a}$.
Without loss of generality, $a=u$.
By the enhanced Galvin property of $\mathcal{J}$ (and shrinking $v$ if needed) we may assume that $v\subseteq v_\xi$ for every $\xi\in{u}$.

If $\zeta,\xi\in{u}$ and $\zeta\neq\xi$ then possibly $Z_\zeta\neq Z_\xi$, but for every $\alpha\in v_\zeta\cap v_\xi$ both $y-Z_\zeta$ and $y-Z_\xi$ are subsets of $B^\zeta_\alpha$ and $B^\xi_\alpha$.
Letting $\xi_0$ be the first element of $u$ and $Z=Z_{\xi_0}$, we see that $y-Z\subseteq B^\xi_\alpha$ for every $\xi\in{u}$ and $\alpha\in{v}$.
Let $b=y-Z$, so $b\in\mathcal{I}^+$ and it satisfies the requirements of the lemma.

\hfill \qedref{lemjgalvin}

Equipped with this lemma, we can prove the following.

\begin{theorem}
  \label{thmconsecutivewondrous} Let $\kappa$ be a regular and uncountable cardinal.
  There are no wondrous ideals $\mathcal{I}$ over $\kappa$ and $\mathcal{J}$ over $\kappa^+$ such that $\mathcal{I}$ is $\mathcal{J}$-Galvin.
\end{theorem}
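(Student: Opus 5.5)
We argue by contradiction: assuming such $\mathcal{I}$ and $\mathcal{J}$ exist, we derive the positive cube relation $\left( \begin{smallmatrix} \kappa^{++} \\ \kappa^+ \\ \kappa \end{smallmatrix} \right) \rightarrow \left( \begin{smallmatrix} \kappa^{++} \\ \kappa^+ \\ \kappa \end{smallmatrix} \right)$. Since $\kappa$ is uncountable, this contradicts Theorem \ref{thmnegterraced} applied with $\lambda=\kappa$.

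Fix a coloring $c:\kappa^{++}\times\kappa^+\times\kappa\rightarrow 2$, and work from the innermost coordinate outwards.

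\textbf{The $\kappa$-coordinate.} For each pair $(\delta,\beta)\in\kappa^{++}\times\kappa^+$, the $\kappa$-completeness of $\mathcal{I}$ gives a color $i(\delta,\beta)\in 2$ such that
\[
B^{\delta}_\beta=\{\gamma\in\kappa\mid c(\delta,\beta,\gamma)=i(\delta,\beta)\}\in\mathcal{I}^+.
\]

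\textbf{The $\kappa^+$-coordinate.} For each $\delta$, the $\kappa^+$-completeness of $\mathcal{J}$ gives a color $i_\delta$ such that
\[
x_\delta=\{\beta\mid i(\delta,\beta)=i_\delta\}\in\mathcal{J}^+.
\]
Thin $\kappa^{++}$ to a set of size $\kappa^{++}$ on which $i_\delta$ is constant, say equal to $i$. Then repeat the argument of Theorem \ref{thmwondstrong} for $\mathcal{J}$: density plus the enhanced Galvin property of $\mathscr{F}_{\mathcal{J}}$ yield $u\in[\kappa^{++}]^{\kappa^{++}}$ and a set $B_0\in\mathcal{J}^+$ with $B_0\subseteq x_\delta$ for every $\delta\in u$. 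So for $\delta\in u$ and $\beta\in B_0$ we have $i(\delta,\beta)=i$.

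\textbf{Finding the common $\gamma$-set.} We now need a single $C\in\mathcal{I}^+$, a set $A\in[u]^{\kappa^{++}}$ and a set $B\in[B_0]^{\kappa^+}$ with $C\subseteq B^\delta_\beta$ for all $\delta\in A$ and $\beta\in B$. This is exactly the kind of statement Lemma \ref{lemjgalvin} produces, but it must be applied to a family indexed by the product rather than by $u$ alone.

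For each $\delta\in u$, apply the $\mathcal{J}$-Galvin property (after using density of $\mathcal{I}$ to pass to a common $y$) to the $\kappa^+$-family $\{B^\delta_\beta\mid\beta\in B_0\}$, transported along a bijection $B_0\leftrightarrow\kappa^+$. This gives $v_\delta\in\mathcal{J}^+$, $v_\delta\subseteq B_0$, and $Z_\delta\in\mathcal{I}$ with $y_\delta-Z_\delta\subseteq B^\delta_\beta$ for all $\beta\in v_\delta$. Next, thin $u$ so that the set $b_\delta=y_\delta-Z_\delta\in\mathcal{I}^+$ has a common dense representative; here we use Lemma \ref{lemjgalvin} itself on $\{b_\delta\mid\delta\in u\}$ to get a single $b\in\mathcal{I}^+$ and $e\in[u]^{\kappa^{++}}$ with $b\subseteq b_\delta$. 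Then use density of $\mathcal{J}$ and the enhanced Galvin property of $\mathcal{J}$ on $\{v_\delta\mid\delta\in e\}$, exactly as in the proof of Theorem \ref{thmwondstrong}, to get $A\in[e]^{\kappa^{++}}$ and $B\in\mathcal{J}^+$ with $B\subseteq v_\delta$ for all $\delta\in A$.

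Setting $C=b$, every triple in $A\times B\times C$ receives color $i$. Since $|A|=\kappa^{++}$, $|B|=\kappa^+$ (by uniformity of $\mathcal{J}$) and $|C|=\kappa$ (by uniformity of $\mathcal{I}$), this is the desired homogeneous product.

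\textbf{Main obstacle.} The hardest step is the last one. It requires the shrinking to a common subset of $v_\delta$ (via the enhanced Galvin property) and the choice of a single $b$ to be mutually compatible. In particular, one must ensure that the set $B$ obtained for $\mathcal{J}$ sits inside each $v_\delta$ exactly, not merely modulo $\mathcal{J}$. For this I would mimic the ``fix $Z=\bigcup$'' trick from Theorem \ref{thmwondstrong}: pass to complements, which lie in $\mathscr{F}_{\mathcal{J}}$, and use the enhanced Galvin property to obtain a single $\mathcal{J}$-null set $Z$ that covers all the exceptional sets at once.
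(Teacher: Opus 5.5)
Your architecture is the paper's. For each $\delta$ you produce a $c_\delta$-monochromatic rectangle $v_\delta\times b_\delta$ with $v_\delta\in\mathcal{J}^+$ and $b_\delta\in\mathcal{I}^+$ (the paper's $A^\delta\times B^\delta$). You then merge the $b_\delta$'s with Lemma~\ref{lemjgalvin} and the $v_\delta$'s with density plus the enhanced Galvin property of $\mathscr{F}_{\mathcal{J}}$, and contradict Theorem~\ref{thmnegterraced}. The order of the two merging steps does not matter. Your explicit handling of the dependence of the color on $\delta$ is cleaner than the paper's write-up, which leaves that pigeonhole implicit. The ``main obstacle'' you flag is not one. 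Exact containment $B\subseteq v_\delta$ is exactly what the complement trick of Theorem~\ref{thmwondstrong} gives, and since that thinning happens inside $e$, the inclusions $b\subseteq b_\delta$ survive.

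One step, however, fails as justified: the per-$\delta$ use of the $\mathcal{J}$-Galvin property ``transported along a bijection $B_0\leftrightarrow\kappa^+$''. The hypothesis only concerns families indexed by all of $\kappa^+$. An arbitrary bijection $\pi\colon B_0\to\kappa^+$ need not send $\mathcal{J}$-null subsets of $B_0$ to $\mathcal{J}$-null sets. So the preimage $v_\delta=\pi^{-1}[A]$ of the set $A\in\mathcal{J}^+$ returned by the hypothesis need not be $\mathcal{J}$-positive. What you are really using is a relativized $\mathcal{J}$-Galvin property for families indexed by a $\mathcal{J}$-positive set, which is not among the assumptions. The paper makes the same tacit move when it extracts $A^\delta\subseteq A^\delta_1$.

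The repair is to reverse the order of operations:
\begin{enumerate}
\item For every $\beta<\kappa^+$, pick $\eta(\delta,\beta)<\kappa$ with $Z^\delta_\beta=y_{\eta(\delta,\beta)}\setminus B^\delta_\beta\in\mathcal{I}$.
\item Apply the $\mathcal{J}$-Galvin property to the whole family $\{\kappa\setminus Z^\delta_\beta\mid\beta<\kappa^+\}$. This gives $A\in\mathcal{J}^+$ with $Z^\delta=\bigcup_{\beta\in A}Z^\delta_\beta\in\mathcal{I}$.
\item Only afterwards use $\kappa^+$-completeness of $\mathcal{J}$ to find $v_\delta\subseteq A$ in $\mathcal{J}^+$ on which both $\eta(\delta,\beta)=\eta_\delta$ and $i(\delta,\beta)=i_\delta$ are constant.
\end{enumerate}
Since $v_\delta\subseteq A$, every $Z^\delta_\beta$ with $\beta\in v_\delta$ is contained in $Z^\delta$. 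Hence $b_\delta=y_{\eta_\delta}\setminus Z^\delta\in\mathcal{I}^+$ satisfies $b_\delta\subseteq B^\delta_\beta$ for all $\beta\in v_\delta$. Then pigeonhole $i_\delta$ over $\kappa^{++}$ and finish exactly as you do. The preliminary construction of $x_\delta$ and $B_0$ is unnecessary, and in fact it is what forced the relativized use.
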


\par\noindent\emph{Proof}. \newline
By way of contradiction assume that such ideals are given.
We shall show that the positive relation $\left( \begin{smallmatrix} \kappa^{++} \\ \kappa^+ \\ \kappa \end{smallmatrix} \right) \rightarrow \left( \begin{smallmatrix} \kappa^{++} \\ \kappa^+ \\ \kappa \end{smallmatrix} \right)$ follows, contradicting Theorem \ref{thmnegterraced}.
Assume, therefore, that $c:\kappa^{++}\times\kappa^+\times\kappa\rightarrow\{0,1\}$ is a coloring.
For every $\delta\in\kappa^{++}$ let $c_\delta=c\upharpoonright\{\delta\}\times\kappa^+\times\kappa$.

We repeat the argument within the proof of the previous theorem with respect to $c_\delta$, but now we obtain a monochromatic product in which the large component belongs to $\mathcal{J}^+$ and the small component is an element of $\mathcal{I}^+$.
Fix $\delta\in\kappa^{++}$.
For every $\alpha\in\kappa^{+}$ let $\chi(\alpha)\in\kappa$ be an ordinal for which $x^\delta_\alpha=\{\beta\in\kappa\mid c_\delta(\alpha,\beta)=\chi(\alpha)\}\in\mathcal{I}^+$.
Let $x^\delta=\{x^\delta_\alpha\mid\alpha\in\kappa^{+}\}$.

Fix a dense family $(y_\eta\mid\eta\in\kappa)\subseteq\mathcal{I}^+$.
For every $\alpha\in\kappa^+$ choose $\eta(\alpha)\in\kappa$ so that $y_{\eta(\alpha)}\subseteq_{\mathcal{I}}x^\delta_\alpha$.
By the completeness of $\mathcal{J}$, one can choose $A_0^\delta\in\mathcal{J}^+$ and a fixed $\eta\in\kappa$ so that $\eta(\alpha)=\eta$ whenever $\alpha\in A^\delta_0$.
Shrink $A_0^\delta$ again, if needed, by choosing $A_1^\delta\subseteq A_0^\delta$ and $\gamma\in\{0,1\}$ such that $A_1^\delta\in\mathcal{J}^+$ and $\chi(\alpha)=\gamma$ whenever $\alpha\in A_1^\delta$.
For every $\alpha\in A_1^\delta$ pick $Z_\alpha^\delta\in\mathcal{I}$ such that $y_\eta-Z_\alpha^\delta\subseteq x^\delta_\alpha$.
Let $W_\alpha^\delta=\kappa-Z^\delta_\alpha$, so $W^\delta_\alpha\in\mathscr{F}_{\mathcal{I}}$ for every $\alpha\in A_1^\delta$.
Apply the assumption that $\mathcal{I}$ is $\mathcal{J}$-Galvin to obtain $A^\delta\subseteq A^\delta_1$ so that $A^\delta\in\mathcal{J}^+$ and $W^\delta=\bigcap\{W^\delta_\alpha\mid\alpha\in A^\delta\}\in\mathscr{F}_{\mathcal{I}}$.
In other words, the set $Z^\delta=\kappa-W^\delta$ is an element of $\mathcal{I}$.
Letting $B^\delta=y_\eta-Z^\delta$ we see that $B^\delta\in\mathcal{I}^+$ and $c_\delta''(A^\delta\times B^\delta)=\{\gamma\}$, as wanted.

We obtain $A^\delta$ and $B^\delta$ for every $\delta\in\kappa^{++}$.
Having the family $\{A^\delta\mid\delta\in\kappa^{++}\}\subseteq\mathcal{J}^+$ at hand, we find an element $v\in\mathcal{J}^+$ and a set $u\in[\kappa^{++}]^{\kappa^{++}}$ such that $v\subseteq A^\delta$ whenever $\delta\in{u}$.
We apply, first, the denseness of $\mathcal{J}$ to find $v'\in\mathcal{J}^+$ and $u'\in[\kappa^{++}]^{\kappa^{++}}$ such that $v'\subseteq_{\mathcal{J}}A^\delta$ for every $\delta\in{u'}$.
Then we employ the enhanced Galvin property to obtain $Z\in\mathcal{J}$ and $u\in[u']^{\kappa^{++}}$ such that $v'-Z\subseteq A^\delta$ for every $\delta\in{u}$.
Letting $v=v'-Z$ we see that $v\in\mathcal{J}^+$, as sought.
Apply Lemma \ref{lemjgalvin} to the family $\{B^\delta\mid\delta\in\kappa^{++}\}$.
Let $b\in\mathcal{I}^+$ and $e\in[\kappa^{++}]^{\kappa^{++}}$ be such that $b\subseteq B^\delta$ for every $\delta\in{e}$.
By re-enumerating we may assume that $b\subseteq B^\delta$ for every $\delta\in\kappa^{++}$.

Consider the product $u\times{v}\times{b}$.
The cardinality of this product is $\kappa^{++}\times\kappa^+\times\kappa$.
If $\delta\in{u}, \alpha\in{v}$ and $\beta\in{b}$ then $\beta\in B^\delta$ since $b\subseteq B^\delta$ and $\alpha\in A^\delta$ since $v\subseteq A^\delta$.
we conclude, therefore, that $c(\delta,\alpha,\beta)=c_\delta(\alpha,\beta)=\gamma$, thus $c''(u\times{v}\times{b})=\{\gamma\}$.
As indicated at the beginning of the proof, this is impossible.

\hfill \qedref{thmconsecutivewondrous}

A major problem remains open:

\begin{question}
  \label{qdouble} Is it consistent that $\binom{\kappa^{++}}{\kappa^+}\rightarrow\binom{\kappa^{++}}{\kappa^+}_2$ holds for some infinite cardinal $\kappa$
\end{question}

If one proves that wondrous ideals (consistently) exist then a positive answer to this problem ensues.
Each one of the two properties of wondrous ideals is forceable by itself; the difficulty is to force them together.
Though wondrous ideals over $\kappa^+$ yield a positive relation of this form, we indicate that such ideals give more than two colors.
It is fairly possible that a weaker saturation property implies a positive answer to this question.

\newpage

\bibliographystyle{alpha}
\bibliography{arlist}

\end{document}